\newtheorem{thm}{Theorem}
\newtheorem{lemma}[thm]{Lemma}
\newtheorem{prop}[thm]{Proposition}
\newtheorem{cor}[thm]{Corollary}
\theoremstyle{definition}
\newtheorem{definition}{Definition}
\newtheorem{rem}{Remark}
\newcommand{\Aut}{\text{Aut}}
\renewcommand{\deg}{\text{deg}}
\newcommand{\cE}{\mathcal{E}}
\newcommand{\cG}{\mathcal{G}}
\newcommand{\cT}{\mathcal{T}}
\newcommand{\cU}{\mathcal{U}}
\newcommand{\subscript}[2]{$#1 _ #2$}
\title{Connectivity in bridge-addable graph classes: the McDiarmid-Steger-Welsh conjecture}
\author[*]{Guillaume Chapuy%
\thanks{
Support from \emph{Agence Nationale de la Recherche}, grant number ANR~12-JS02-001-01 ``Cartaplus'', and from the City of Paris, grant ``\'Emergences Paris 2013, Combinatoire \`a Paris''.
This work was done while G.C. was visiting McGill University, School of Computer Science -- many thanks to Luc Devroye for his hospitality. 
Email:~{\tt guillaume.chapuy@liafa.univ-paris-diderot.fr}.
}
}
\author[**]{Guillem Perarnau%
\thanks{
Email:~{\tt guillem.perarnaullobet@mcgill.ca}.
}
}
\affil[*]{\small\it {\sc IRIF, UMR CNRS 8243},
Universit\'e Paris-Diderot,
France.
%}\affil[*]{

\small\it {\sc CRM, UMI CNRS 3457},
Universit\'e de Montr\'eal,
Canada.}
\affil[**]{\small\it School of Computer Science, McGill University, Montr\'eal, QC, Canada.
}
\begin{document}
\maketitle
\begin{abstract}
A class of graphs is \emph{bridge-addable} if given a graph $G$ in the class, any graph obtained by adding an edge between two connected components of $G$ is also in the class. We prove a conjecture of McDiarmid, Steger, and Welsh, that says that if
$\mathcal{G}_n$ is any bridge-addable class of graphs on $n$ vertices, and $G_n$ is taken uniformly at random from $\mathcal{G}_n$, then $G_n$ is connected with probability at least $e^{-\frac{1}{2}} + o(1)$, when $n$ tends to infinity. This lower bound is asymptotically best possible since it is reached for forests.

%\deleted{Previous results on this problem include the lower bound $e^{-1}+o(1)$ proved by McDiarmid, Steger and Welsh, and the successive improvements to $e^{-0.7983}+o(1)$ by Ballister, Bollob\'{a}s and Gerke, and to $e^{-2/3}+o(1)$ in an unpublished draft of Norin. The bound $e^{-\frac{1}{2}} + o(1)$ was already known in the special case of bridge-alterable classes, independently proved by Addario-Berry, McDiarmid, and Reed, and by Kang and Panagiotou.}

Our proof uses a ``local double counting'' strategy that may be of independent interest, and that enables us to compare the size of two sets of combinatorial objects by solving a related multivariate optimization problem. In our case, the optimization problem deals with partition functions of trees relative to a supermultiplicative functional.
 
%A byproduct of the proof is that any sequence $\mathcal{G}_n$ that realizes the bound  $e^{-\frac{1}{2}} + o(1)$ is close to a uniform random forest for the local limit topology.
%Our proof is very different from these, since in the general case the model cannot be reduced to an explicit model of random forests.
%The proof relies on the filtration of the set of graphs according to a parameter space that records, roughly speaking, the density of small trees inside the class. We then use classical double counting arguments, but they are locally optimized with respect to the parameter space.
\end{abstract}

\section{Introduction, notation, main result}

In this paper, unless otherwise stated, all graphs are finite, simple and with vertex set $\{1,\dots,n\}$ for some $n\geq 1$. Following~\cite{MCSW}, we say that a family $\mathcal{G}$ of  graphs is \emph{bridge-addable}  if the following is true:
\begin{quote}
If $G$ is a graph from $\mathcal{G}$, and if $e$ is an edge not in $G$ whose endpoints belong to two different connected components of $G$, then the graph $G\cup\{e\}$ obtained by adding $e$ to $G$ is in $\mathcal{G}$.
\end{quote}

\noindent
The notion of bridge-addability was motivated by the study of connectivity in random planar graphs (since the class of planar graphs is clearly bridge-addable). Other examples of bridge-addable classes include forests, triangle-free graphs, graphs having a perfect matching, or any minor closed class of graphs whose excluded minors are 2-connected. See~\cite{MCSW, ABMCR} for even more examples.

In~\cite{MCSWplanar, MCSW}, the authors investigate the properties of a graph taken uniformly at random from a bridge-addable class. In particular they show the following:
\begin{prop}[\cite{MCSWplanar}]
For every $\epsilon>0$, there exists an $n_0$ such that for every $n\geq n_0$
and any bridge-addable class $\mathcal{G}_n$ of graphs with $n$ vertices, we have
\begin{align}\label{eq:weak}
\Pr \left( G_n \mbox{ is connected} \right) \geq (1-\epsilon)e^{-1}\;,
\end{align}
where $G_n$ is chosen uniformly at random from $\mathcal{G}_n$.
\end{prop}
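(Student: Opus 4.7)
The plan is a direct double-counting argument on the refined counts $d_k := |\mathcal{G}_n^{(k)}|$, where $\mathcal{G}_n^{(k)} \subseteq \mathcal{G}_n$ denotes the subset of graphs with exactly $k$ connected components. The goal is the recursion
\[
d_{k+1} \leq \frac{d_k}{k} \qquad (k \geq 1),
\]
which iterated yields $d_k \leq d_1/(k-1)!$ and hence $\sum_{k \geq 1} d_k \leq e \cdot d_1$, so that $\Pr(G_n \text{ connected}) = d_1 / \sum_k d_k \geq e^{-1}$. The $(1-\epsilon)$ factor in the statement will actually turn out to be unnecessary.

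To establish the recursion I would count in two ways the pairs $(G,e)$ with $G \in \mathcal{G}_n^{(k+1)}$ and $e$ a pair of vertices lying in distinct components of $G$. If $G$ has component sizes $n_1,\dots,n_{k+1}$, the number of such $e$ equals $\sum_{i<j} n_i n_j = \tfrac12(n^2 - \sum_i n_i^2)$. The key combinatorial estimate is the lower bound
\[
\sum_{i<j} n_i n_j \;\geq\; \frac{k(2n-k-1)}{2},
\]
obtained by maximizing $\sum_i n_i^2$ under $n_i \geq 1$ and $\sum_i n_i = n$, which by convexity occurs at the unbalanced partition $(n-k,1,\dots,1)$. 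This produces a pair count of at least $\tfrac{1}{2}k(2n-k-1)\, d_{k+1}$. For the matching upper count, bridge-addability forces $G' := G \cup \{e\} \in \mathcal{G}_n^{(k)}$ with $e$ necessarily a bridge of $G'$; since any graph on $n$ vertices with $k$ components has at most $n-k$ bridges (every bridge lies in every spanning forest), the pair count is at most $(n-k)\, d_k$. Combining gives $d_{k+1} \leq \frac{2(n-k)}{k(2n-k-1)}\, d_k$, and a direct check shows this ratio is at most $1/k$ for every $k \geq 1$.

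The main subtlety — and the reason for this being the substantive step — is the choice of the lower bound on $\sum_{i<j} n_i n_j$. The crude estimate ``$\geq n-1$'', which holds as soon as $G$ is disconnected, is independent of $k$ and destroys the factorial decay, yielding only $\Pr(\text{connected}) = \Omega(1/\sqrt n)$. The partition-sensitive bound of order $kn$ above, which genuinely exploits that $G$ has $k+1 \geq 2$ components, is precisely what produces the $1/k$ recursion and the constant $e^{-1}$. This also suggests why improving beyond $e^{-1}$ (as pursued in the references cited in the introduction) requires a finer analysis of the typical component structure of a uniformly random $G \in \mathcal{G}_n$.
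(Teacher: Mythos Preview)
Your proof is correct, and it is essentially the argument the paper records as Lemma~\ref{lemma:simpleCounting} (the paper does not prove Proposition~1 separately but cites it, reproducing only this key lemma). The one cosmetic difference is that you compute the minimum of $\sum_{i<j} n_i n_j$ exactly as $\tfrac{k(2n-k-1)}{2}$, whereas the paper uses the slightly weaker lower bound $k(n-k)$; both yield $d_{k+1}\le d_k/k$ and hence the non-asymptotic bound $\Pr(G_n\text{ connected})\ge e^{-1}$, so your remark that the $(1-\epsilon)$ is superfluous is also correct.
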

The authors of~\cite{MCSW} conjectured that the constant $e^{-1}$ in~\eqref{eq:weak} can be improved to $e^{-1/2}$. If true, this would be best possible, since it is proved in~\cite{Renyi} that if $F_n$ is a uniform random forest on $n$ vertices, then one has when $n$ tends to infinity:
$$
\Pr\left(F_n \mbox{ is connected}\right) \longrightarrow e^{-1/2}.
$$
 \noindent The first partial result towards the proof of the conjecture was obtained by Balister, Bollob\'as, and Gerke~\cite{BBG} who improved the constant in~\eqref{eq:weak} from $e^{-1}$ to $e^{-0.7983}$.
Norin, in an unpublished draft~\cite{Norine}, improves it to $e^{-2/3}$. Until the present paper these were, as far as we know, the best results under general hypotheses. 

Under the (much) stronger hypothesis that the class is also \emph{bridge-alterable} (i.e. that the class is \emph{also} stable by bridge deletion), Addario-Berry, McDiarmid and Reed~\cite{ABMCR}, and Kang and Panagiotou~\cite{KP} independently improved the constant to the general conjectured value $e^{-1/2}$. Both proofs use the fact that graphs from bridge-alterable classes can be encoded by weighted forests, so that the problem reduces to estimating the connectivity in a random weighted forest.
 Unfortunately, this encoding fails for general bridge-addable classes, so these proof techniques do not apply to the general case. Note also that many bridge-addable classes are \emph{not} bridge-alterable, for example graphs that admit a perfect matching, graphs that have a component of large size, or  graphs in any bridge-addable class that contain a  given subgraph (for example, planar graphs containing a  path of given length).

In this paper, we prove the general conjecture, \textit{i.e.} we improve the constant in \eqref{eq:weak} to $e^{-1/2}$ using only the hypothesis of bridge-addability. 
%In the rest of the paper we fix $(\cG_n)_{n\geq 1}$ a sequence of bridge-addable classes of graphs, where graphs in $\cG_n$ all have $n$ vertices.
Our main result is the following:
\begin{thm}\label{thm:main}
The McDiarmid-Steger-Welsh conjecture is true: For every $\epsilon>0$, there exists an $n_0$ such that for every $n\geq n_0$ 
and any bridge-addable class $\mathcal{G}_n$ of graphs with $n$ vertices 
we have
\begin{align*}%\label{eq:main}
\Pr \left( G_n \mbox{ is connected} \right) \geq (1-\epsilon)e^{-1/2}\;,
\end{align*}
where $G_n$ is chosen uniformly at random from $\mathcal{G}_n$.
\end{thm}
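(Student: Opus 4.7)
The natural starting point is the classical double counting of~\cite{MCSW}. For each $k\geq 2$, one counts triples $(G,T,(e_T))$ where $G\in\mathcal{C}_n^{(k)}$ has components of sizes $c_1,\dots,c_k$, $T$ is a spanning tree of the super-graph of components, and $e_T$ is, for each edge of $T$, a choice of one cross-edge between the corresponding components. Iterated bridge-addability ensures that $G\cup(\bigcup e_T)\in\mathcal{G}_n$. The generalized Cayley identity
\[
\sum_{T\,\text{spanning tree of}\,K_k}\;\prod_{ij\in T} c_ic_j \;=\; n^{k-2}\prod_i c_i
\]
gives $n^{k-2}\prod_i c_i$ triples per $G$, and a simple matroidal bound on preimages per $H\in\mathcal{C}_n^{(1)}$ produces the $e^{-1}$ estimate. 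The slack sits on profiles where one component is large and the rest are singletons, since then the bound uses only $\prod c_i\geq 1$ while typical configurations have $\prod c_i$ much larger.

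I plan to tighten this to $e^{-1/2}$ by a \emph{local} double counting that weights each triple by the number of connected graphs of the class restricted to each component. Write $C(V)$ for the number of graphs of $\mathcal{G}$ that are connected on the vertex set $V$. Bridge-addability gives the supermultiplicativity
\[
C(V_1\cup V_2)\;\geq\;|V_1|\,|V_2|\,C(V_1)\,C(V_2)\qquad(V_1\cap V_2=\emptyset),
\]
because to each pair of connected graphs on $V_1,V_2$ and each cross edge one associates a distinct connected graph in $\mathcal{G}_{|V_1\cup V_2|}$. Iterating this inequality along a spanning tree of the super-graph of components and invoking the Cayley identity at the level of connected enumerators, one obtains an inequality that relates a tree-weighted partition function $\sum_{(V_1,\dots,V_k)}\prod_i C(V_i)\cdot n^{k-2}\prod_i|V_i|$ to the full count $C(\{1,\dots,n\})=|\mathcal{C}_n^{(1)}|$. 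Dividing through, the ratio $|\mathcal{C}_n^{(k)}|/|\mathcal{C}_n^{(1)}|$ is controlled by a partition function on trees in which the leaf of size $c$ carries weight built from both $c$ and $C$.

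Summing over $k$ reduces the McDiarmid-Steger-Welsh conjecture to a \emph{multivariate optimization problem}: over all profiles $(c_1,\dots,c_k)$ with $\sum c_i=n$ and all $k\geq 1$, bound the resulting sum. The supermultiplicativity lets one replace the class-dependent weights $C(V_i)$ by purely analytic ones, so the problem becomes one of estimating a Cayley-like generating series. The forest class saturates the supermultiplicativity, and R\'enyi's formula then yields exactly the constant $e^{-1/2}$, as required for tightness.

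The principal obstacle, in my view, is this last optimisation: the tree partition function couples the component sizes $c_i$ non-separably through the tree structure, so standard rearrangement or convexity arguments do not apply directly. I anticipate needing either an induction on $k$ with a cleverly chosen invariant, or a transformation into a one-parameter family whose extremum can be computed explicitly; this is presumably where the ``multivariate optimization'' promised by the abstract enters. Modulo this analytic step, the remainder should be a careful but mechanical extension of the classical double counting, with bridge-addability providing, through the supermultiplicative functional $C$, the extra combinatorial degree of freedom that saves the factor $e^{1/2}$ over the weak bound.
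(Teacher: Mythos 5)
There is a genuine gap, and it sits at the very foundation of your plan: the functional $C(V)$ and the supermultiplicative inequality $C(V_1\cup V_2)\geq |V_1|\,|V_2|\,C(V_1)\,C(V_2)$. For a general bridge-addable class $\mathcal{G}_n$ (all of whose members live on the \emph{full} vertex set $[n]$), the quantity ``number of connected graphs of the class on $V$'' is not well defined for a proper subset $V$, and no matter how you define it (e.g.\ by conditioning on the components outside $V$), the count of graphs in $\mathcal{G}_n$ whose components are exactly $V_1,\dots,V_k$ does \emph{not} factor as $\prod_i C(V_i)$: the class may impose joint constraints across components (graphs with a perfect matching, graphs containing a fixed subgraph, graphs with a component of large size, \dots). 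What bridge-addability actually gives is only the joint inequality $|V_1||V_2|\cdot\#\{G:\text{components }V_1,\dots,V_k\}\leq\#\{G:\text{components }V_1\cup V_2,V_3,\dots,V_k\}$, i.e.\ the classical $e^{-1}$ double counting you start from. The product structure you want to iterate along a spanning tree is precisely the ``weighted forest'' encoding that is available for bridge-\emph{alterable} classes (and for the class of all forests, which is why your sanity checks pass) but provably unavailable in the general bridge-addable setting --- this is exactly the obstruction the paper identifies when explaining why the Addario-Berry--McDiarmid--Reed and Kang--Panagiotou arguments do not generalize. So the ``careful but mechanical extension'' you defer to is where the entire difficulty lives, and the route through component-size profiles $(c_1,\dots,c_k)$ and Cayley's identity does not reach it.

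The paper's actual mechanism is different in two structural ways you should note. First, it reduces everything to comparing two components against one ($|\mathcal{B}_n|\leq(\tfrac12+\epsilon')|\mathcal{A}_n|$) and then transfers to $k$ components by an induction that re-applies the two-component bound to the bridge-addable class induced on the largest part; no spanning-tree/Cayley summation over all $k$ is needed. Second, the ``local'' statistics is not the component-size profile but the vector $\alpha^G$ counting pendant copies of each small rooted tree; both $\mathcal{A}_n$ and $\mathcal{B}_n$ are partitioned into cells of this parameter space, the double counting is performed cell by cell, and the supermultiplicativity that saves the factor $e^{1/2}$ appears not for counts of graphs in the class but for the maximal weights $\omega(T,\mathbf{z})$ of tree decompositions, via a supermultiplicative dissymmetry theorem $Y-Y^u\geq\tfrac12 Y^2$ forcing $Y^u\leq\tfrac12$. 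Your high-level intuition (an optimization over tree partition functions, with forests as the extremal case) matches the paper's, but the combinatorial input you propose to feed into that optimization is not one that bridge-addability supplies.
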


We  believe that, beyond the own interest of the conjecture, our proof is interesting for several other reasons. First, the resolution of this problem promotes further this ``alternative approach'' to random graphs with constraints, and raises the question of finding other class properties, beyond bridge-addability, that give rise to interesting and non-trivial predictable asymptotic behaviour. For example McDiarmid and Scott~\cite{McDiarmidScott} have studied \emph{block-classes} (where the class property is stability under taking a two-connected component) and Balister, Bollob\'{a}s and Gerke~\cite{BBG} have studied the property of \emph{$2$-addability}. One can expect that in years to come both more examples and a greater unification of the techniques will be possible. Second, our proof is based on a combinatorial technique that we call ``local double counting'' that is interesting on its own and that we believe may be useful in many other situations. We give a quick description of this technique in the next section: roughly speaking, it mixes a classical double counting technique already used in previous works on this topic with a careful analysis of the \emph{local structure} of the objects under study. 
The idea of taking advantage of the local structure of the graphs to improve the double-counting argument has already been used in the papers~\cite{BBG,Norine} via a \emph{weighted} version of the double-counting setup. As far as we know, the technique we introduce here is of a different nature. In particular, to prevent any confusion, we mention that the word \emph{weight} in this paper is used as in statistical mechanics to refer to the pondering used in the definition of some partition functions that appear in an optimization problem inherited from our double-counting arguments.  However here the underlying double-counting is  \emph{unweighted} in nature, and the efficiency of our method comes from the fact it is performed \emph{locally} (see Section~\ref{sec:discussion}).
	
Finally, our proof gives more information on random graphs from bridge-addable classes than what is contained in Theorem~\ref{thm:main}. We are able to quantify the probability of having any small number of connected components:
\begin{thm}\label{thm:main2}
For every $\epsilon>0$ and for every $k \geq 0$, there exists an $n_0$ such that for $n\geq n_0$ one has:
\begin{eqnarray*}
\Pr \left( G_n \mbox{ has at most $k\!+\!1$ connected components} \right)
 \geq
\Pr \left( \mbox{Poisson}\left(\frac{1}{2}\right)  \leq k \right) - \epsilon.
\end{eqnarray*}
\end{thm}
The proof is a simple extension of the proof of~Theorem~\ref{thm:main} (that corresponds to $k=0$).
\medskip

In the rest of the paper, we fix an integer $n$ and a bridge-addable class $\cG_n$ such that all graphs in $\cG_n$ have $n$ vertices.
We let $\cG_n^{(i)}$ be the graphs in $\cG_n$ having exactly $i$ connected components, and we use the shortcut notations $\mathcal{A}_n:=\cG^{(1)}_n$ and $\mathcal{B}_n:=\cG^{(2)}_n$.
The main ingredient in the proof of Theorem~\ref{thm:main} is the following proposition:
\begin{prop}\label{prop:AvsB}
For all $\epsilon'>0$ there exists $n_0$ such that for $n\geq n_0$ one has:
\begin{eqnarray}\label{eq:AvsB}
|\mathcal{B}_n|
\leq \left(\frac{1}{2}+\epsilon' \right) |\mathcal{A}_n|.
\end{eqnarray}
\end{prop}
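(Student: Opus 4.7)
The classical double count that yields the weaker bound $|\mathcal{B}_n|\leq|\mathcal{A}_n|$ proceeds by enumerating pairs $(G,e)$ with $G\in\mathcal{B}_n$ and $e$ one of the $a(G)(n-a(G))$ edges between the two components of $G$. Since $G+e\in\mathcal{A}_n$ by bridge-addability and each $H\in\mathcal{A}_n$ has at most $n-1$ bridges, one obtains $\sum_{G} a(G)(n-a(G))\leq (n-1)|\mathcal{A}_n|$, from which $|\mathcal{B}_n|\leq|\mathcal{A}_n|$. Equality in both steps is approached only when every $G\in\mathcal{B}_n$ has an isolated vertex ($a(G)=1$) \emph{and} every $H\in\mathcal{A}_n$ is a tree ($\beta(H)=n-1$); these two conditions are mutually incompatible in general, and the plan is to exploit this tension through a \emph{local} bookkeeping that tracks the full bridge structure of $H$ rather than merely the number of bridges.

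Concretely, I would associate to each $H\in\mathcal{A}_n$ its bridge tree $T(H)$, obtained by contracting each 2-edge-connected block to a vertex weighted by its size; the edges of $T(H)$ are exactly the bridges of $H$. I would then weight each pair $(G,e)$ in the double count by a function $w$ depending only on $T(H)$ and on the analogous bridge forests of the two components of $G$. By iterating bridge-addability --- removing bridges one at a time and re-adding them in many orders --- one should obtain a weighted identity
\[
\sum_{G\in\mathcal{B}_n} W(G) \;\leq\; \sum_{H\in\mathcal{A}_n} W'(H)\;,
\]
where $W,W'$ are local partition functions of the bridge trees. The structural property I would impose on $w$ is \emph{supermultiplicativity}: when a weighted tree $T$ splits along an edge into $T_1,T_2$, one has $w(T)\geq w(T_1)w(T_2)$. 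This is the combinatorial counterpart of the fact that an $H$ with many bridges can be ``thinned'' in many ways into elements of $\mathcal{B}_n$, and should therefore be charged accordingly by the refined count.

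Once the weighted identity and supermultiplicativity are in place, the proposition reduces to a multivariate analytic optimization: maximize a ratio of tree partition functions over all vertex-weighted trees of total weight $n$. Supermultiplicativity forces the extremum to lie at ``maximally split'' configurations --- trees with many small blocks and limited branching --- which correspond exactly to the forest-like structures for which R\'enyi's classical computation gives the connectivity probability $e^{-1/2}$, matching the sharp constant $\tfrac{1}{2}$ expected here. The principal obstacle is twofold: (i) designing $w$ so that both the combinatorial identity \emph{and} the supermultiplicativity inequality hold and combine to give exactly the constant $\tfrac{1}{2}+\epsilon'$, and (ii) solving the resulting tree optimization uniformly in $n$, which likely requires a separate treatment of highly unbalanced graphs (those with an isolated vertex, where $a(n-a)=n-1$) and a careful asymptotic analysis of the balanced case.
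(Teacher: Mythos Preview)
Your proposal identifies several of the themes that do appear in the paper's proof --- supermultiplicative tree weights, partition functions of trees, and a resulting optimization problem --- but as written it is a research plan rather than a proof. You explicitly list as ``principal obstacles'' exactly the two steps that carry all the content: (i) designing the weight $w$ so that a useful inequality and supermultiplicativity both hold, and (ii) solving the resulting optimization to extract the constant $\tfrac12$. Neither is carried out, and the weight $w$ is never defined even at the level of a candidate formula. The bridge-tree framework you describe also overlaps substantially with the known reduction to forests (via the block structure), after which the bridge tree of $H$ is simply $H$ itself; so that step by itself does not advance beyond the weaker bounds already in the literature.

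The specific idea that is missing, and that does the real work in the paper, is \emph{localization in a finite-dimensional parameter space}. Rather than a single global weighted double count, the paper records for each graph the vector $\alpha^G=(\alpha^G(T))_{T\in\mathcal{T}_0}$ of pendant-copy counts for a fixed finite family $\mathcal{T}_0$ of rooted trees, partitions $\mathcal{A}_n$ and each $\mathcal{B}_n^U$ into boxes $[\alpha]^w$ in this space, and performs a separate double count in each box. Because adding or removing a bridge changes $\alpha^G$ by at most a bounded amount, the double count is genuinely local, and this is what produces, for every box, a lower bound on each $\alpha(T)$ in terms of the ratios $z^U_{n,\alpha}=\mathrm{Aut}_u(U)\,|\mathcal{B}^U_{n,[\alpha]}|/|\mathcal{A}_{n,[\alpha]}|$. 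Summing over $T\in\mathcal{T}_0$ and using $\sum_T\alpha(T)\leq n-1$ then yields a bound on a truncated \emph{rooted} partition function $Y_{\mathcal{T}_0}(\mathbf{z}_{n,\alpha})$; the supermultiplicative dissymmetry inequality $Y-Y^u\geq\tfrac12 Y^2$ converts this into the bound $\tfrac12$ on the corresponding \emph{unrooted} partition function, which is what controls $\sum_U |\mathcal{B}^U_{n,[\alpha]}|/|\mathcal{A}_{n,[\alpha]}|$. An averaging (boxing) lemma then reassembles the local bounds into the global one. Without this local parametrization and the rooted/unrooted dissymmetry step, a single global weighted count of the kind you describe does not appear to yield better than the $e^{-2/3}$ constant obtained by earlier weighted approaches.
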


\medskip
\noindent{\bf Structure of the paper}.
We start in Section~\ref{sec:discussion} with a high level discussion on the ``local double counting'' strategy that we use to prove our main result and that may be of independent interest -- the reader only interested in the proofs can skip this discussion.
In Section~\ref{sec:admit}, we prove Theorems~\ref{thm:main} and~\ref{thm:main2} admitting Proposition~\ref{prop:AvsB}. The proof of Proposition~\ref{prop:AvsB} occupies the rest of the paper.  In Section~\ref{sec:local}, we define the local parametrization of our graph classes (Section~\ref{subsec:localparams}), we state the main combinatorial double counting argument (Section~\ref{subsec:localcounting}), and we use it to obtain a local bound (Corollary~\ref{cor:sumBound} in Section~\ref{sec:weights}) on a functional of some ratios relating the size of $\mathcal{A}_n$ to the size of certain subsets of $\mathcal{B}_n$. This functional takes the form of a truncated partition function of rooted trees carrying some supermultiplicative weights. In order to use this bound, we study partition functions of trees in  Section~\ref{sec:partitionfunctions}. In Section~\ref{subsec:defspartition}, we first study  untruncated partition functions, and we relate the rooted and unrooted case via an analogue of the dissymmetry theorem (Lemma~\ref{lemma:disymmetry}). In Section~\ref{subsec:optimization}, we transfer the results of Section~\ref{subsec:defspartition} to the setting of truncated partition functions (Proposition~\ref{prop:obj}). Finally, in Section~\ref{sec:finishproof}, we finish the proof of Proposition~\ref{prop:AvsB}. In Section~\ref{subsec:boxing}, we show that we can choose a good local partitioning of our graphs classes,
that enables us to apply the results of Sections~\ref{sec:local} and~\ref{sec:partitionfunctions} to conclude the proof 
in Section~\ref{subsec:endofproof}.

\section{The ``local double counting'' approach.}
\label{sec:discussion}

Previous approaches to the problem are based on double counting arguments that, using the addability hypothesis, enable to compare the proportion of graphs having different number of connected components in a class.
The basic tool underlying the double counting, which is useful in many situations in combinatorics, is the following: to compare the sizes of two sets $\mathcal{A}$ and $\mathcal{B}$, construct a bipartite graph $H$ on $(\mathcal{A},\mathcal{B})$ in a way that we can control the degrees of vertices on each side; if all vertices from $\mathcal{A}$ have degree at least $d_\mathcal{A}$, and all vertices from $\mathcal{B}$ have degree at most $d_B$, then $|\mathcal{B}|\leq d_\mathcal{A}/d_\mathcal{B} |\mathcal{A}|$. In the context of addable classes, the roles of the sets $\mathcal{A}$ and $\mathcal{B}$ are played by $\mathcal{A}_n$ and $\mathcal{B}_n$ with previous notation, and the adjacencies in $H$ are based on the relation of edge deletion. A classical way to strengthen this method is to apply it with an \emph{edge-weighted} bipartite graph structure, for a well chosen notion of weight: this technique is useful in many situations -- for example it is this refinement that is used in  \cite{BBG, Norine}. The weights enable to make the double counting more sensitive to the particular structure of the elements of $\mathcal{A}$ and $\mathcal{B}$. 
Unfortunately, even with the use of edge-weights, approaches that are solely based on a global double counting argument seem to fall short to prove Theorem~\ref{thm:main}. Our novel approach does not consider edge-weights and exploits the local structure in a different way.

The main feature of our approach, that enables us to reach a tight bound, is that we exploit thoroughly the locality of the adjacencies in $H$. To this end, we design a ``local double counting'' strategy, based on several different steps, that we believe deserves to be described at a general level. Indeed it may be useful in many other situations where one wants to compare the size of two combinatorial sets and neither a classical double counting nor a weighted one lead to sharp enough bounds. This is the purpose of this section.
 
The locality of our approach appears at two different levels.
\emph{First}, to each element in $\mathcal{A}\cup \mathcal{B}$ we associate a statistics $\alpha$, with value in some finite dimensional space $\mathcal{E}$ that we call the parameter space, that is such that if  two elements $a \in \mathcal{A}$ and $b\in \mathcal{B}$ are linked by an edge in the bipartite graph $H$, then their corresponding $\alpha$-statistics are close in the space $\mathcal{E}$.
Moreover, if the statistics $\alpha$ is well chosen, knowing the $\alpha$-statistics of an element of $\mathcal{A}\cup \mathcal{B}$ allows us to give a more precise bound on its degree in $H$. 
%We can then apply the double counting argument locally, \textit{i.e.} by grouping together elements whose $\alpha$-statistics belongs to a small ball in the space $\mathcal{E}$.
For any $\alpha\in \mathcal{E}$ we can then group together, in $\mathcal{A}$ and $\mathcal{B}$, elements whose statistics belongs to a small neighbourhood of $\alpha$ into sets $\mathcal{A}_{[\alpha]}$ and $\mathcal{B}_{[\alpha]}$, and apply the double argument locally to obtain a bound on the ``local ratio'' $|\mathcal{B}_{[\alpha]}|/|\mathcal{A}_{[\alpha]}|$.

%One can also hope to be able to combine these local bounds together into a global bound, provided we have enough control on the actual locality of the coupling in the bipartite graph.
The \emph{second} way in which our approach is local is that the statistics that we choose is \emph{itself} local. Namely, in our case, the elements of $\mathcal{A}\cup \mathcal{B}$ are graphs, and the finite-dimensional parameter associated to a graph records the number of pendant copies of each tree under a certain size. It is thus a ``local observable'' -- similar to the observables underlying the local limit convergence of~\cite{BenjaminiSchramm}.

In order to make use of this abstract set-up, one needs to be able to work quantitatively with the bounds obtained from this approach: this is done as follows. On the one hand, for each element $\alpha$ in the parameter space we have, provided the previous steps were successful, a relation between each coordinate of $\alpha$ and the ratios $|\mathcal{B}_{[\alpha]}|/|\mathcal{A}_{[\alpha]}|$. On the other hand, by construction, the statistics $\alpha$ satisfies some simple \emph{global} constraints (in our case, its $L^1$-norm is smaller than $n$).
 Putting all these inequalities together, we obtain some global constraint on  a function of the ratio $|\mathcal{B}_{[\alpha]}|/|\mathcal{A}_{[\alpha]}|$, and the problem of finding the best possible upper bound on the ratio $|\mathcal{B}_{[\alpha]}|/|\mathcal{A}_{[\alpha]}|$ reduces to an \emph{optimization problem}: namely, how large can the ratio $|\mathcal{B}_{[\alpha]}|/|\mathcal{A}_{[\alpha]}|$ be, knowing that the global constraint holds. The maximum of these bounds then leads\footnote{The actual situation is a bit more technical. Indeed because the locality of the adjacencies in $H$ is only approximate, there is some overlap between the sets $\mathcal{A}_{[\alpha]}$ to take into account; this is easily handled with an averaging argument (Boxing lemma, Lemma~\ref{lemma:grid}).}
 to a bound on the ratio $|\mathcal{B}|/|\mathcal{A}|$.

In the case addressed in the present paper, this optimization problem takes the somewhat explicit form of optimizing a partition function (or generating function) of unrooted trees, given the constraint that the corresponding partition function of rooted trees is bounded. It is important to note that here, the generating functions we study are \emph{not} the generating functions of the objects in the graph class under study (which is \emph{any} bridge-addable class). It is a generating function of the ``local observables'' that we have chosen to consider, and that are the same for any graph class. Hence the role of generating functions in the present work is very different from the cases of exactly solvable models, such as random series-parallel or planar graphs~\cite{BGKN, GN}.

Another feature of our method in the present case is that in order to obtain sharper bounds, we also need to partition the set $\mathcal{B}$ into finitely many subsets $\mathcal{B}^U$ (where, in our case, the index $U$ is some unlabeled tree from a fixed finite family). This extra partitioning is not a necessary feature of the ``local double counting'' strategy we are describing, but it makes it more general.
For each $U$, we then consider the induced bipartite graph on $(\mathcal{A},\mathcal{B}^U)$, and in each case, we apply the previous ideas to get a bound on the local ratios $|\mathcal{B}^U_{[\alpha]}|/|\mathcal{A}_{[\alpha]}|$.  We then obtain a bound of the local ratio $|\mathcal{B}_{[\alpha]}|/|\mathcal{A}_{[\alpha]}|$ as a sum of these bounds. As before, an appropriate global constraint on the $\alpha$-statistics gives rise to an optimization problem, which is now \emph{multidimensional}: how large can the sum of these bounds be, given the global constraint. The number of variables of the problem is thus the number of indexing elements $U$.

\medskip

To sum up this discussion, the ``local double counting'' technique may be useful to bound the ratio of the size of two sets $\mathcal{A}$ and $\mathcal{B}$ in situations where: 
\begin{itemize}[itemsep=0pt, topsep=0pt, parsep=0pt, leftmargin=20pt]
\item[0.]
a classical double counting technique that constructs a bipartite graph on $(\mathcal{A},\mathcal{B})$ leads to an interesting bound, but not sharp; 
\item[1.]
there is a natural statistics, with value in some multidimensional parameter space (possibly of dimension arbitrarily large), that makes the bipartite graph structure ``local''; one expects this statistics to be itself a ``local measurement'' of the objects under study; 
\item[2.] 
one can write a local double counting bound that is more precise than the global one; and
\item[3.] there is some global constraint on the statistics that translates into a constraint on the local ratios;
\end{itemize}
then the method should apply, and one can get in principle a bound (either lower or upper) on the ratios as the solution of an optimization problem.
 In the case where 
\begin{itemize}[itemsep=0pt, topsep=0pt, parsep=0pt, leftmargin=20pt]
\item[4.] the set $\mathcal{B}$ may be split into several sets $\mathcal{B}^U$ in order to refine the local bounds, 
\end{itemize}
then the method applies as well, but the optimization problem becomes multidimensional. This will be the case in this paper.

\section{Proof of the main results admitting Proposition~\ref{prop:AvsB}}
\label{sec:admit}

In this section, we show how to deduce our main results (Theorems~\ref{thm:main} and ~\ref{thm:main2}) from Proposition~\ref{prop:AvsB}.
The proof of Proposition~\ref{prop:AvsB} itself is more complicated, and occupies the remaining sections.

First, the following result shows that the inequality~\eqref{eq:AvsB} relating $\mathcal{A}_n=\cG^{(1)}_n$ and $\mathcal{B}_n=\cG^{(2)}_n$ can be ``transferred'' to a larger number of connected components:
\begin{prop}\label{prop:ivsi1}
Assume that Proposition~\ref{prop:AvsB} is true.
Then for all $\epsilon'>0$, and for each $i_0 \geq 1$, there exists $n_0$ such that for all $i\leq i_0$ and $n\geq n_0$ one has:
$$
i|\cG^{(i+1)}_n| \leq   \left(\frac{1}{2}+\epsilon' \right) |\cG^{(i)}_n|
%\left(\frac{1}{2}+\epsilon' \right) |\cG^{(i)}_n| \geq i\cdot |\cG^{(i+1)}_n|.
$$

%For all $i\geq 1$ one has:
%$$
% |\cG^{(i)}_n| \geq i |\cG^{(i+1)}_n|.
%$$
\end{prop}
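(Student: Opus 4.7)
The plan is to prove Proposition~\ref{prop:ivsi1} by a local double-counting argument that applies Proposition~\ref{prop:AvsB} to a family of induced bridge-addable subclasses obtained by freezing the structure on most of the vertex set. Concretely, for any $G \in \cG_n^{(i)} \cup \cG_n^{(i+1)}$, we count triples $(G, v_0, \mathbf{H})$ where $v_0 \in \{1,\dots,n\}$ is a vertex and $\mathbf{H} = \{H_1,\dots,H_{i-1}\}$ is an unordered family of $i-1$ connected components of $G$, none of which contains $v_0$. For $G \in \cG_n^{(i)}$ the choice of $v_0$ forces $\mathbf{H}$ to be the unique set of all $i-1$ components not containing $v_0$, yielding $n$ triples per graph. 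For $G \in \cG_n^{(i+1)}$, each $v_0$ leaves $i$ non-$v_0$ components from which $\mathbf{H}$ is chosen as an $(i-1)$-subset, giving $n\cdot i$ triples per graph. The total counts are therefore $n \cdot |\cG_n^{(i)}|$ and $n\cdot i \cdot |\cG_n^{(i+1)}|$ respectively, which already explains the factor $i$ appearing in the target inequality.

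For each fixed $\mathbf{H}$, set $V' := \{1,\dots,n\} \setminus \bigcup_j V(H_j)$ and define $\mathcal{H}_\mathbf{H}$ to be the class of graphs $G'$ on $V'$ such that the disjoint union $G' \sqcup H_1 \sqcup \dots \sqcup H_{i-1}$ lies in $\cG_n$. This class is bridge-addable on $V'$: adding a bridge between two components of $G'$ corresponds, in the disjoint union, to adding a bridge between two components that avoid all the $H_j$, and membership in $\cG_n$ is preserved by bridge-addability. The restriction $G \mapsto G[V']$ is then a bijection identifying, for any $v_0 \in V'$, the triples with $G \in \cG_n^{(i+1)}$ and marking $(v_0,\mathbf{H})$ with $\mathcal{H}_\mathbf{H}^{(2)}$, and those with $G \in \cG_n^{(i)}$ with $\mathcal{H}_\mathbf{H}^{(1)}$. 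Applying Proposition~\ref{prop:AvsB} to $\mathcal{H}_\mathbf{H}$, which is valid as soon as $|V'|$ exceeds a threshold $n_1 = n_1(\epsilon'/2)$ provided by that proposition, yields the local inequality
\[
|\mathcal{H}_\mathbf{H}^{(2)}| \;\leq\; \left(\tfrac{1}{2} + \tfrac{\epsilon'}{2}\right) |\mathcal{H}_\mathbf{H}^{(1)}|.
\]

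Summing this local inequality over all ``good'' triples, meaning those with $|V'|\geq n_1$, gives $n\cdot i\cdot |\cG_n^{(i+1)}| - B \leq (1/2+\epsilon'/2)\cdot n\cdot |\cG_n^{(i)}|$, where $B$ collects the contribution from the ``bad'' triples with $|V'|<n_1$. Each bad triple has $v_0 \in V'$ with $|V'|<n_1$, so each bad $\mathbf{H}$ admits at most $n_1$ choices of $v_0$; and each $G \in \cG_n^{(i+1)}$ admits at most $\binom{i+1}{i-1}=\binom{i+1}{2}$ bad $\mathbf{H}$'s, corresponding to the choice of the two unmarked components. Combining this with the classical weak bound $i\cdot |\cG_n^{(i+1)}| \leq (1+o(1))\cdot |\cG_n^{(i)}|$ (which follows from a direct double counting of bridges as in~\cite{MCSW}) yields $B \leq n_1 \binom{i_0+1}{2} \cdot O(|\cG_n^{(i)}|/i) = O(n_1\, i_0)\cdot |\cG_n^{(i)}|$. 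Dividing through by $n$ and choosing $n_0$ large enough that $O(n_1 i_0 / n) \leq \epsilon'/2$, we obtain $i \cdot |\cG_n^{(i+1)}| \leq (1/2+\epsilon') \cdot |\cG_n^{(i)}|$.

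The main obstacle is thus the treatment of the bad triples, where the vertex set $V'$ is too small to allow a direct application of Proposition~\ref{prop:AvsB}. The key point is that by summing over $v_0$ rather than fixing a specific vertex, the final inequality is normalized by a factor $1/n$, and this factor is enough to make the $O(n_1)$-sized bad region negligible once $n$ is chosen much larger than both $n_1$ and $i_0$. Any soft a priori bound between $|\cG_n^{(i+1)}|$ and $|\cG_n^{(i)}|$, such as the McDiarmid--Steger--Welsh one, is sufficient for this step; what matters is that the numerical constant of this rough estimate is absorbed by the $1/n$ factor, so that only the sharp constant $1/2+\epsilon'$ coming from Proposition~\ref{prop:AvsB} remains in the final bound.
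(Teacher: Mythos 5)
Your argument is correct, and it reaches the same key step as the paper -- freezing $i-1$ connected components $\mathbf{H}$ and applying Proposition~\ref{prop:AvsB} to the induced class $\mathcal{H}_{\mathbf{H}}$ on the remaining vertices, which is bridge-addable -- but the surrounding bookkeeping is genuinely different. The paper works with probabilities $\Pr(V_1,\dots,V_{i+1})$ and first fixes a total order on subsets of $[n]$ so that the part $W_1$ to which Proposition~\ref{prop:AvsB} is applied is always the \emph{largest} part of the partition, hence of size at least $n/(i+1)$; the factor $1/i$ then arises from symmetrizing over which of the $i+1$ components gets merged into $W_1$. As a result the paper has no ``bad'' configurations at all, and needs no a priori bound. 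You instead average over a marked vertex $v_0$, which produces the factor $i$ as $\binom{i}{i-1}$ and leaves a set of bad triples where $|V'|<n_1$; these you must kill with the crude bound $i|\cG_n^{(i+1)}|\leq|\cG_n^{(i)}|$ (Lemma~\ref{lemma:simpleCounting}) together with the $1/n$ normalization coming from the sum over $v_0$. Both routes are valid: yours costs an extra appeal to the rough double-counting bound and a slightly more delicate choice of $n_0$ (depending on $n_1$ and $i_0$ jointly), while the paper's ordering trick buys a cleaner inequality with no error term to absorb. Your accounting of the bad set is sound -- $|V'|$ depends only on $\mathbf{H}$, each $G\in\cG_n^{(i+1)}$ admits at most $\binom{i+1}{2}$ choices of $\mathbf{H}$, and each bad $\mathbf{H}$ admits at most $n_1$ choices of $v_0$ -- so the proof goes through.
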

\noindent We note that a similar transfer principle was already used in \cite[Lemma 3.1]{ABMCR} in the context of alterable classes and with a different proof specific to that case. A similar argument was used without proof by Norin in his draft~\cite{Norine}. 
\begin{proof}

For every $i\geq 1$ and disjoint sets $V_1,\dots,V_i$, we write $\Pr(V_1,\dots,V_i)$ for the probability that for each $j\leq i$, the set $V_j$ induces a connected component in the graph $G_n$ chosen uniformly at random from $\cG_n$.
Beware that in the following we will use this notation in cases where $V_1, V_2. \dots, V_i$ is partition of $[n]=\{1,2,\dots,n\}$ but also in other cases where it is not.

We consider the following total order on the subsets of $[n]$; for every $V_1,V_2\subseteq V$ we have $V_1>V_2$ if $|V_1|>|V_2|$ or $|V_1|=|V_2|$ and the elements of $V_1$ are smaller in the lexicographical order, than the ones in $V_2$. We remark that if $V_1> \dots> V_i$ is a partition of $[n]$, then $|V_1|\geq\frac{n}{i}$.

Then, with $\uplus$ denoting the disjoint union, we have:
\begin{align*}
\Pr(G\in \cG^{(i+1)}_n) &= \sum_{V_1>\dots >V_{i+1}\atop
V_1 \uplus \dots\uplus V_{i+1} = [n]} \Pr(V_1,\dots,V_{i+1})\\
 &= \frac{1}{i} \sum_{j=1}^i \sum_{V_1>\dots >V_{i+1}\atop
V_1 \uplus \dots\uplus V_{i+1} = [n]} \Pr(V_1,\dots,V_{i+1})\\
 &= \frac{1}{i} \sum_{W_1>\dots >W_{i}\atop
W_1 \uplus \dots\uplus W_{i} = [n]} \sum_{W_1=W^{1}_1\uplus W^{2}_1 \atop
W_1^1 > W_2, W_1^1 > W_1^2}  \Pr(W^{1}_1, W^{2}_1,W_2,\dots,W_{i}),
\end{align*}
where the last equality is just a change of index, that consists in noting $V_1=W_1^1$, $V_{j+1}=W_1^2$, $W_1=V_1\uplus V_{j+1}$, and in summing first over the set $W_1$ and then over its partitions into two sets (here $W_2, \dots, W_i$ denote the remaining sets $V_k$ for $k\not\in \{ 1,j+1\}$). Note the constraint $W^1_1>W_2$, that comes from the fact that $V_1$ is the largest set of the partition (for $<$)  before the change of index. If we remove this constraint, we only make the sum larger and we get the upper bound:
\begin{align*}
\Pr(G\in \cG^{(i+1)}_n)
  &\leq \frac{1}{i} \sum_{W_1>\dots >W_{i}\atop
W_1 \uplus \dots\uplus W_{i} = [n]} \sum_{W_1=W^{1}_1\uplus W^{2}_1 \atop W_1^1 > W_1^2
}  \Pr(W^{1}_1, W^{2}_1,W_2,\dots,W_{i}) \\
% &= \frac{1}{i} \sum_{W_1>\dots >W_{i}\atop
%W_1 \cup \dots\cup W_{i} = [n]} \Pr(W_2,\dots,W_{i})
%\sum_{W_1=W^{1}_1\uplus W^{2}_1\atop W_1^1 > W_1^2}\Pr(W^{1}_1, W^{2}_1\mid %W_2,\dots,W_{i}).
\end{align*}
For any set $W\subseteq V$ we write $G[W]$ for the graph induced by the set of vertices in $W$. For any graph $Z$, we denote by $G[W]\cong Z$ the event that $W$ induces a connected component that it is isomorphic to $Z$.
Given a partition $W_1 \uplus \dots \uplus W_i = V$ such that $W_1>\dots>W_i$ and graphs $Z_2,\dots Z_i$,
consider the graph class
$$
\mathcal{H} (W_1,W_2\dots,W_i;Z_2,\dots, Z_i)
=\{G[W_1]:\;G\in \cG_n \text{ and } G[W_j]=Z_j \text{ for }2\leq j\leq i\}\;.
$$
Notice that $\mathcal{H}(W_1,W_2\dots,W_i;Z_2,\dots, Z_i)$ is a bridge-addable class on the set of vertices $W_1$. Let $H$ be a graph chosen uniformly at random from $\mathcal{H}(W_1,W_2\dots,W_i;Z_2,\dots, Z_i)$. By the remark above, $H$ has order at least $\frac{n}{i+1}$. Let $n_0$ such that $\frac{n_0}{i_0+1}\geq n_0(\epsilon')$, where $n_0(\epsilon')$ is the constant that appears in Proposition~\ref{prop:AvsB}. Using this proposition we have
\begin{align*}
\sum_{W_1=W^{1}_1\cup W^{2}_1\atop W_1^1 > W_1^2}
\Pr(W^{1}_1, W^{2}_1,W_2,\dots,W_{i}) 
% \Pr(W^{1}_1, W^{2}_1\mid W_2,\dots,W_{i})
 &= \sum_{Z_2,\dots,Z_i}\Pr(H\in \mathcal{H}^{(2)} ( W_1,W_2\dots,W_i;Z_2,\dots, Z_i))\\
&\hspace{1cm}\cdot\Pr(H\in  \mathcal{H}(W_1,W_2\dots,W_i;Z_2,\dots, Z_i)\\
&\leq \sum_{Z_2,\dots,Z_i} \left(\frac{1}{2}+\epsilon' \right) \Pr(H\in \mathcal{H}^{(1)}( W_1,W_2\dots,W_i;Z_2,\dots, Z_i))\\
&\hspace{1cm} \cdot\Pr(H\in \mathcal H( W_1,W_2\dots,W_i;Z_2,\dots, Z_i))\\
%&\leq \left(\frac{1}{2}+\epsilon' \right) \Pr(H\in \mathcal{H}^{(1)})\\
&= \left(\frac{1}{2}+\epsilon' \right) \Pr(W_1, W_2,\dots,W_{i})\;.
%&= \left(\frac{1}{2}+\epsilon' \right) \Pr(W_1\mid W_2,\dots,W_{i})\;.
\end{align*}
Thus returning to the previous bound we obtain:
\begin{align*}
i\Pr(G\in \cG^{(i+1)}_n) 
%&\leq  \left(\frac{1}{2}+\epsilon' \right) \sum_{W_1>\dots >W_{i}} \Pr(W_2,\dots,W_{i}) \Pr(W_1\mid W_2,\dots,W_{i})\\
&\leq  \left(\frac{1}{2}+\epsilon' \right) \sum_{W_1>\dots >W_{i}\atop
W_1 \uplus \dots\uplus W_{i} = [n]} \Pr(W_1,W_2,\dots,W_{i})\\
&=  \left(\frac{1}{2}+\epsilon' \right) \Pr(G\in \cG^{(i)}_n)\;. \qedhere
\end{align*}
%The second part of the proposition uses the simple double counting argument of Balister, Bollobas and Gerke~\cite{??}.

%TODO, the idea is as follows: we partition $\cG^{(i+1)}$ in two parts. The first part is made by graphs in which at least two components are larger than some fixed $K$. For this class we have a double-counting argument saying that it is much smaller than $\cG^{(i)}$ if $K$ is large enough. The second part is the rest. We can partition this second class into finitely pieces depending on the smallest component. For each of them the things in front form an addable class and we can use induction. Then use Guillem's nice averaging argument to make the factor $i$ appear. Let's be careful with quantifiers.
\end{proof}

To conclude the proof of Theorem~\ref{thm:main}, we will also need the following observation from~\cite{BBG}. We include the proof for the sake of completeness.
\begin{lemma}[\cite{BBG}]\label{lemma:simpleCounting}
For each $i, n\geq 1$ one has:
$$
i|\cG^{(i+1)}_n| \leq  |\cG^{(i)}_n|.
$$
\end{lemma}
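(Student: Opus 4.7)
The plan is to prove this classical bound by a double-counting argument on the bipartite adjacency induced by bridge addition. Form the bipartite graph $H$ with vertex classes $\cG^{(i+1)}_n$ and $\cG^{(i)}_n$, where $G \sim G'$ whenever $G' = G \cup \{e\}$ for some edge $e$. By bridge-addability, for any $G \in \cG^{(i+1)}_n$ and any edge $e$ joining two distinct connected components of $G$, the graph $G \cup \{e\}$ lies in $\cG^{(i)}_n$; so each such pair $(G,e)$ contributes an edge of $H$.

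First I would bound the $\cG^{(i+1)}_n$-degrees from below. If $G \in \cG^{(i+1)}_n$ has components of sizes $n_1,\dots,n_{i+1}$ with $n_j \ge 1$ and $\sum_j n_j = n$, then the number of edges joining two distinct components of $G$ equals $\sum_{j<k} n_j n_k$. Subject to the constraints, this quantity is minimized (equivalently $\sum_j n_j^2$ is maximized) when one component has size $n-i$ and the remaining $i$ components are singletons, giving the value $i(n-i) + \binom{i}{2} = \frac{i(2n-i-1)}{2}$.

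Next I would bound the $\cG^{(i)}_n$-degrees from above. An edge of $H$ incident to $G' \in \cG^{(i)}_n$ corresponds to a single bridge $e$ of $G'$ such that $G' \setminus \{e\} \in \cG_n$, so the degree is at most the total number of bridges of $G'$. Within each connected component $C$ of $G'$, the bridges form an acyclic subgraph on $|C|$ vertices (since any cycle uses no bridge), hence there are at most $|C|-1$ of them; summing over the $i$ components yields at most $n-i$ bridges in total.

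Double counting the edges of $H$ now gives
\[
\frac{i(2n-i-1)}{2}\, |\cG^{(i+1)}_n| \;\le\; (n-i)\, |\cG^{(i)}_n|,
\]
equivalently $i\,|\cG^{(i+1)}_n| \le \frac{2(n-i)}{2n-i-1}\,|\cG^{(i)}_n|$. Since $\frac{2(n-i)}{2n-i-1} \le 1$ is equivalent to $-2i \le -i-1$, i.e.\ to $i \ge 1$, the desired inequality $i\,|\cG^{(i+1)}_n| \le |\cG^{(i)}_n|$ follows. There is no serious obstacle here; the only points requiring some care are identifying the extremal component-size profile in the lower bound and the bridge-forest argument for the upper bound, both of which are routine.
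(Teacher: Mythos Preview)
Your proof is correct and follows essentially the same approach as the paper: the same bipartite graph on $(\cG^{(i)}_n,\cG^{(i+1)}_n)$ defined by bridge addition/deletion, the same upper bound of $n-i$ on the $\cG^{(i)}_n$-side degrees, and the same double counting. The only difference is cosmetic: you compute the sharper lower bound $i(n-i)+\binom{i}{2}$ on the $\cG^{(i+1)}_n$-side degree via the extremal component profile, whereas the paper simply uses the weaker bound $i(n-i)$, which already suffices to cancel the factor $n-i$ directly.
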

\begin{proof}
Construct a bipartite graph $H$ on the vertex set $(\cG^{(i)}_n, \cG^{(i+1)}_n)$ by adding an edge between $G_1 \in \cG^{(i)}_n$ and $G_2 \in \cG^{(i+1)}_n$ if $G_2$ can be obtained from $G_1$ by removing an edge. Note that a graph $G_1\in \cG^{(i)}_n$ has degree at most $n-i$ in $H$, since $G_1$ has at most $n-i$ cut-edges. Moreover a graph $G_2 \in \cG^{(i+1)}_n$ has degree at least $i(n-i)$, by the property of bridge-addability. By counting the edges of $H$ in two different ways, we thus get:
$$
i(n-i)|\cG^{(i+1)}_n| \leq |E(H)| \leq  (n-i) |\cG^{(i)}_n|,
$$
which concludes the proof.
\end{proof}
\medskip

We first prove the main theorem.
\begin{proof}[Proof of Theorem~\ref{thm:main}]
%TODO (is the last proposition obviously enough or do we need slightly more?)

Let $\epsilon'=\ln((1-\epsilon)^{-1}-\epsilon)$ and let $x=\frac{1}{2}+\epsilon'$. Set $i_0$ to be large enough such that $\frac{1}{i_0!}<\epsilon\cdot e^{1/2}$.
Using Proposition~\ref{prop:ivsi1} recursively with $\epsilon'$ and $i_0$, we have that there exists an $n_0$ such that for every $i\leq i_0$ and for every $n\geq n_0$
$$
|\cG^{(i+1)}_n|\leq \frac{x^i}{i!}|\mathcal{A}_n|\;.
$$
Moreover, for every $i>i_0$ we have from Lemma~\ref{lemma:simpleCounting}:
%(TODO: think if we want to explain it); for every $i\geq 1$ we have $|\cG^{(i)}_n| \geq i |\cG^{(i+1)}_n|$. Then,
$$
|\cG^{(i+1)}_n|\leq \frac{1}{i!}|\mathcal{A}_n|\;.
$$
Using both inequalities we obtain that for any $n\geq n_0$
\begin{align*}
  |\cG_n| &=\sum_{i=1}^n |\cG^{(i)}_n| \leq \left(\sum_{i=1}^{i_0}\frac{x^{i-1}}{(i-1)!} + \sum_{i=i_0+1}^{n}\frac{1}{i!}  \right) |\mathcal{A}_n| \leq \left( e^x+ \frac{1}{i_0!} \right) |\mathcal{A}_n|\\
  &\leq \left( e^{\frac{1}{2}+\epsilon'}+ \epsilon \cdot e^{1/2} \right) |\mathcal{A}_n|= (1-\epsilon)^{-1}  e^{1/2} |\mathcal{A}_n|\;.
\end{align*}
The probability that a graph $G_n$ chosen uniformly at random from $\cG_n$ is connected is
$$
\Pr \left( G_n \mbox{ is connected} \right)
=
\frac{|\mathcal{A}_n|}{|\cG_n|}\geq (1-\epsilon) e^{-1/2}\;,
$$
provided that $n\geq n_0$.
\end{proof}

We conclude by proving the extension of our main result.
\begin{proof}[Proof of Theorem~\ref{thm:main2}]

Let $n$ be a large constant (to be fixed later) and let $M_n$ be the number of components of a graph chosen uniformly at random in $\cG_n$ minus one. By Lemma~\ref{lemma:simpleCounting}, there exists $i_0\geq 1$ such that 
\begin{align}\label{eq:tail}
\Pr(M_n\geq i_0+1)< \epsilon/2\;.
\end{align}
We can assume that $k\leq i_0$ since otherwise we are done by~\eqref{eq:tail}.

We will use a result on stochastic domination of Poisson distributions given by McDiarmid~(see Lemma 3.3 in~\cite{Mrandom2012}). Given $\alpha>0$ and $k_0\geq 1$, if $X$ is a non-negative integer-valued random variable such that for every $k=0,1,\dots, k_0-1$ we have
$$
\Pr(X= k+1)\leq \frac{\alpha}{k+1} \Pr(X=k)\;,
$$
then 
\begin{align}\label{eq:tail2}
\Pr(k_0\geq X\geq k)\leq \Pr( \mbox{Poisson}(\alpha)\geq k)\;.
\end{align}

By Proposition~\ref{prop:ivsi1} with the chosen $i_0$, for every $\epsilon'$ there exists an $n_0$ such that if $n\geq n_0$, the random variable $M_n$ satisfies the hypothesis of Lemma 3.3 in~\cite{Mrandom2012} with $\alpha=\frac{1}{2}+\epsilon'$. Fix $k_0=i_0$. Using~\eqref{eq:tail} and~\eqref{eq:tail2}, we obtain
\begin{align*}
\Pr( G_n \mbox{ has at most $k\!+\!1$ connected components})
&=\Pr(M_n\leq k)\\
&=1-\Pr(i_0\geq M_n\geq k+1)- \Pr(M_n\geq i_0+1)\\ 
&\geq 1- \Pr( \mbox{Poisson}(1/2+\epsilon')\geq k+1) -\epsilon/2\\ 
&= \Pr( \mbox{Poisson}(1/2+\epsilon')\leq k) -\epsilon/2\\
&= \Pr( \mbox{Poisson}(1/2)\leq k) -\epsilon\;,
\end{align*}
where the last inequality follows provided that $\epsilon'$ is small enough with respect to $\epsilon$.
\end{proof}

\section{Local double counting and local parameters}\label{sec:local}

We now start the proof of the main technical estimate, namely Proposition~\ref{prop:AvsB}. The following reduction will be very useful:
\begin{lemma}[{\cite[Lemma~2.1]{BBG}}]
Assume that Proposition~\ref{prop:AvsB} is true under the additional assumption that all graphs in $\mathcal{A}_n$ and $\mathcal{B}_n$ are forests. Then it is also true without this assumption.
\end{lemma}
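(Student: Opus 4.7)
The plan is to reduce the general case to the forest case by ``factoring out'' the 2-edge-connected structure of each graph. For $G\in\mathcal{G}_n$, define its \emph{shape} $s(G)=(V_1,\dots,V_r;B_1,\dots,B_r)$, where $(V_1,\dots,V_r)$ is the partition of $[n]$ into vertex sets of the 2-edge-connected components of $G$ (isolated vertices regarded as singleton 2-edge-connected components) and $B_i$ is the subgraph induced on $V_i$. The bridges of $G$ are exactly its edges joining two different $V_i$'s, and after contracting each $V_i$ to a super-vertex they form a (simple) forest on $r$ super-vertices whose super-components are in bijection with the connected components of $G$. Partition $\mathcal{G}_n=\bigsqcup_s\mathcal{G}_n^s$ accordingly.

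For each shape $s$ I would introduce an auxiliary forest class on $[n]$ as follows. Fix once and for all a canonical spanning tree $T_i$ of the vertex set $V_i$ (say the path through $V_i$ in increasing order of labels). For $G\in\mathcal{G}_n^s$, set $\hat{F}(G):=T_1\cup\cdots\cup T_r\cup\mathrm{bridges}(G)$, a forest on $[n]$ whose connected components coincide with those of $G$. Let $\hat{\mathcal{F}}^s:=\{\hat{F}(G):G\in\mathcal{G}_n^s\}$. Since the trees $T_i$ are determined by $s$ and $G$ is determined by $s$ together with its bridges, the map $G\mapsto\hat{F}(G)$ is a bijection $\mathcal{G}_n^s\to\hat{\mathcal{F}}^s$ that preserves the number of connected components.

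Next I would verify that $\hat{\mathcal{F}}^s$ is itself a bridge-addable class of forests on $[n]$. Given $\hat{F}=\hat{F}(G)\in\hat{\mathcal{F}}^s$ and an edge $e=(u,v)$ joining two components of $\hat{F}$, the fact that $T_i\subseteq\hat{F}$ spans $V_i$ forces each $V_i$ to lie entirely inside a single component of $\hat{F}$, so $u\in V_i$ and $v\in V_j$ with $V_i\neq V_j$. In particular $u$ and $v$ lie in different components of $G$; by bridge-addability of $\mathcal{G}_n$, $G+e\in\mathcal{G}_n$, and since $e$ is a bridge of $G+e$ it belongs to no cycle, so the 2-edge-connected component structure is unchanged and $G+e$ still has shape $s$. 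Hence $\hat{F}(G+e)=\hat{F}+e\in\hat{\mathcal{F}}^s$, proving bridge-addability.

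Applying Proposition~\ref{prop:AvsB} under the forest-case hypothesis to each $\hat{\mathcal{F}}^s$ (for $n\geq n_0$, the threshold from the forest version) then yields $|\hat{\mathcal{F}}^{s,(2)}|\leq(\tfrac12+\epsilon')|\hat{\mathcal{F}}^{s,(1)}|$, which by the bijection is equivalent to $|\mathcal{G}_n^{s,(2)}|\leq(\tfrac12+\epsilon')|\mathcal{G}_n^{s,(1)}|$. Summing over all shapes produces $|\mathcal{B}_n|\leq(\tfrac12+\epsilon')|\mathcal{A}_n|$, as required. The main point to handle carefully is the verification that edges added in the bridge-addability check for $\hat{\mathcal{F}}^s$ never land inside a single $V_i$ and that adding a bridge to $G$ preserves its 2-edge-connected decomposition; both follow from the standard fact that a bridge lies in no cycle, hence in no 2-edge-connected subgraph.
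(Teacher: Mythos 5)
Your proposal is correct and is essentially the argument the paper sketches (and attributes to Balister--Bollob\'as--Gerke): partition $\mathcal{G}_n$ by the $2$-edge-connected block structure, replace each block by a canonical spanning tree to get, for each class, a component-count-preserving bijection onto a bridge-addable class of forests on $[n]$, apply the forest case, and sum. The only cosmetic difference is that you take the canonical tree on each $V_i$ to be the increasing path on its labels rather than a spanning subtree of the block itself, which changes nothing in the argument.
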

\noindent The proof of this lemma relies on a simple and beautiful argument that consists in splitting the sets $\mathcal{A}_n$ and $\mathcal{B}_n$ into equivalence classes depending on their $2$-edge connected blocks, and then choosing a spanning tree arbitrarily in each block. This construction transforms any bridge-addable class of graphs into several bridge-addable classes of forests while preserving the distribution of the number of components, from which the lemma easily follows. We refer to \cite{BBG} for the full proof.

\medskip
Thanks to the last lemma, for the rest of the paper, we will make the following assumption:
\smallskip

\noindent{\bf Assumption:}
{\it For all $n\geq 1$, all graphs in $\mathcal{A}_n$ and $\mathcal{B}_n$ are forests. }

\subsection{Local parameters and partitions}\label{subsec:localparams}

In order to compare the sizes of $\mathcal{A}_n$ and $\mathcal{B}_n$, we will refine the double counting technique used in the proof of Lemma~\ref{lemma:simpleCounting}. 
We will again construct a bipartite graph on the vertex set $(\mathcal{A}_n, \mathcal{B}_n)$ where an edge is placed between $G_1\in \mathcal{A}_n$ and $G_2\in \mathcal{B}_n$ if one can be obtained from the other by the deletion of an edge.
However, in order to obtain more precise bounds on the degrees of vertices in this bipartite graph, we will partition the sets $\mathcal{A}_n$ and $\mathcal{B}_n$ according to some local parameters of the graphs. Namely, to each graph $G$ we will associate a statistics $\alpha^G$ that records, roughly speaking, the number of pendant copies in $G$ of each tree from some finite family $\mathcal{T}_0$. The vectors $\alpha^G$ will be elements of a space called the parameter space and denoted by $\mathcal{E}$.  The set $\mathcal{B}_n$ will be further partitioned according to the isomorphism type of the smallest component, and a special role will be played by subsets where this smallest component belongs to a finite family of trees called $\mathcal{U}_0$. The purpose of this subsection is to set notation and define these partionings.

We write $\cT$ for the family of all rooted unlabeled trees and $\cU$ for the family of all unrooted unlabeled trees. We also use $\cT^{\ell}$ and $\cU^{\ell}$ for the corresponding sets of \emph{labeled} objects.
For every tree $U\in\cU$ we note by $\Aut_u(U)$ for the total number of automorphisms of $U$.
For every $T\in \cT$ we note by $\Aut_r(T)$ the number of automorphisms of $T$ as a rooted tree (i.e. the number of automorphisms that fix the root of $T$).

In this section we will fix two finite families $\cU_0\subset \cU$ and $\cT_0\subset \cT$ such that
\begin{itemize}[itemsep=2pt, topsep=0pt, parsep=0pt, leftmargin=30pt]
\item $\cU_0$ contains the only unrooted tree of order one (a single vertex).
\item $\cT_0$ is closed under rooted inclusion; that is, if $T\in \cT$ with root at $v$ and $T'\subseteq T$ is a subtree that contains the root, then $T'\in \cT$.
\end{itemize}
We write $t_{\max}$ and $u_{\max}$ for the maximum number of vertices of a tree in $\cT_0$ and $\cU_0$ respectively.

Following the definition in~\cite{ABMCR}, for each tree $G$ and each edge $e$, the \emph{pendant tree} of $G$ in $e$ is denoted by $s(G,e)$ and it is the smallest component of $G- e$, or the component containing vertex $1$ if the components have equal size. Since the tree $G$ has $n-1$ edges, every tree on $n$ vertices has exactly $n-1$ pendant trees.

Define the \emph{parameter space} $\cE=[0..(n-1)]^{\cT_0}$. To each tree $G \in \mathcal{A}_n$ we associate the vector $\alpha^G\in \cE$ such that, $\alpha^G(T)$ is the number of pendant copies of $T$ in $G$. Precisely, for every rooted tree $T\in \mathcal{T}_0$, $$\alpha^G(T):=|\{e\in E(G):\; s(G,e)\cong T\}|,$$
where the symbol $\cong$ denotes isomorphisms of \emph{rooted} trees.
Then, for each tree $G\in \mathcal{A}_n$, we have that
\begin{eqnarray}\label{eq:constraintSumAlphaDiscrete}
\sum_{T \in \cT_0} \alpha^G(T) \leq n-1.
\end{eqnarray}
The definition of $\alpha^G(T)$ can be extended to forests as follows. If $G$ is a forest with connected components $G_1\dots G_i$, where $G_1$ is the largest component (or, if there is ambiguity, the one containing the smallest vertex among the largest ones), then $\alpha^G(T)=\alpha^{G_1}(T)$ for every rooted tree $T\in \mathcal{T}_0$.

For every $\alpha\in \cE$, we use the notation $\mathcal{A}_{n,\alpha}$ to denote the set of graphs $G\in \mathcal{A}_n$ such that $\alpha^G=\alpha$. In other words, $\mathcal{A}_{n,\alpha}$ is the set of graphs in $\mathcal{A}_n$ that have precisely $\alpha(T)$ pendant trees that are isomorphic to $T$, for each $T\in \cT_0$. This partitions $\mathcal{A}_n$ into different sets according to their $\alpha$-statistics:
$$
\mathcal{A}_n= \biguplus_{\alpha \in \cE} \mathcal{A}_{n,\alpha}\;.
$$
Note that by \eqref{eq:constraintSumAlphaDiscrete} one can restrict the previous union to vectors $\alpha$ satisfying $\sum_{T\in\cT_0} \alpha(T) \leq n-1$. For every $\Gamma\subseteq \cE$, we also define
$$
\mathcal{A}_{n,\Gamma} = \biguplus_{\alpha \in \Gamma} \mathcal{A}_{n,\alpha}\;.
$$

For every unrooted tree $U \in \cU$, we let $\mathcal{B}_n^U$ be the subset of $\mathcal{B}_n$ composed by graphs whose smallest component (or the one containing vertex $1$ if they have the same size) is isomorphic to~$U$. All the previous definitions for $\mathcal{A}_n$ extend naturally to the sets $\mathcal{B}_n$ and $\mathcal{B}^U_n$. That is, we can partition the set $\mathcal{B}_n$ according to $U\in \cU$ and the $\alpha$-statistics:
$$
\mathcal{B}_n= \biguplus_{U \in \cU} \mathcal{B}_n^U= \biguplus_{U \in \cU} \biguplus_{\alpha \in \cE} \mathcal{B}^U_{n,\alpha}\;.
$$

\subsection{The main local double counting lemma}
\label{subsec:localcounting}
%(TODO: think about the Switching word)

In this subsection we construct the promised bipartite graph structure on $(\mathcal{A}_n,\mathcal{B}_n)$, and we analyse \emph{locally} the degrees of this graph. This enables us, for each $\alpha$ in the parameter space $\mathcal{E}$, to compare the number  of graphs $G$ in $\mathcal{A}_n$ and $\mathcal{B}_n$ whose statistics $\alpha^G$ is close to $\alpha$ (Corollary~\ref{cor:boundforTe}).

Consider a rooted tree  $T \in \cT$ and an edge $e$ of $T$. If we remove $e$ from $T$, we obtain two connected components: we note $T_-\in \cT$ the one containing the root of $T$, and $U_+\in \cU$ the other one. We let $v_-\in V(T_-)$ and $v_+\in V(U_+)$ be the two endpoints of $e$. We emphasize that $T_-$ is considered as a rooted tree (rooted at the root of $T$), but $U_+$ is considered as an unrooted tree.
%Both $T_-$ and $U_+$ carry a marked vertex (respectively $v_-$ and $v_+$).

\begin{definition}[Multiplicities of edges and vertices in rooted or unrooted trees, see Figure~\ref{fig:multiplicities}]\label{def:multiplicities}
Let $T, e, T_-, U_+, v_-, v_+$ be as above. Then:
\begin{itemize}[itemsep=0pt, topsep=0pt, parsep=0pt, leftmargin=10pt]
\item[-]
The \emph{rooted multiplicity of the edge $e$ in $T$}, denoted by $m_T(e)$ is the number of distinct edges in $T$ that are mapped to $e$ by some isomorphism of $T$ (as a rooted tree).
\item[-]
The \emph{rooted multiplicity of the vertex $v_-$ in $T_-$}, denoted by $m_{T_-}(v_-)$ is the number of distinct vertices in $T_-$ that are mapped to $v_-$ by some isomorphism of $T_-$ (as a rooted tree, rooted at the root of $T$).
\item[-]
The \emph{unrooted multiplicity of $v_+$ in $U_+$}, denoted by $n_{U_+}(v_+)$ is the number of distinct vertices in $U_+$ that are mapped to $v_+$ by some isomorphism of $U_+$ (as an unrooted tree).
\end{itemize}
\end{definition}
\begin{figure}[h!!]
\begin{center}
\includegraphics[width=0.8\linewidth]{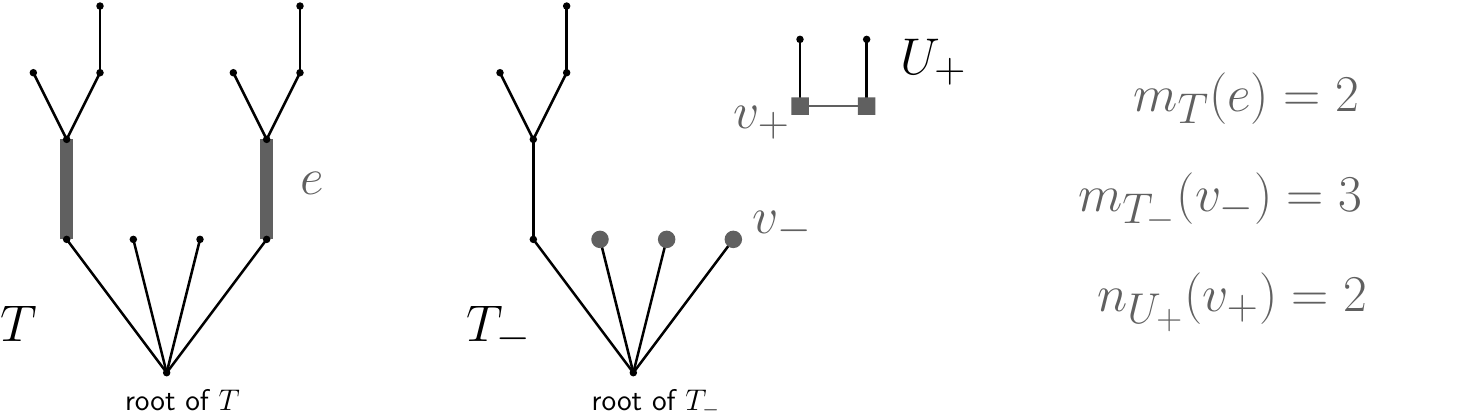}
\caption{Construction of $T_-$ and $U_+$ from $T$ and $e$, and the relevant multiplicities.}
\label{fig:multiplicities}.
\end{center}
\end{figure}

In order to state our main combinatorial lemma, we first need to introduce the concept of boxes.
%We equip the set $\cE$ with the partial order $\leq$ given by the product order on all coordinates. If $\alpha^-, \alpha^+ \in \cE$ are such that $\alpha^- \leq \alpha^+$,
For every $\alpha\in \cE$ and $w\geq 1$ we define the \emph{box} $[\alpha]^w\subset \cE$ as the parallelepiped:
$$
[\alpha]^w:=\{\alpha'\in \cE: \ \forall T \in \cT_0, \ \alpha(T) \leq \alpha'(T) < \alpha(T)+w\}.
$$
The parameters $\alpha$ and $w$ of the box $[\alpha]^w$ will be referred to as its \emph{lower corner} and its \emph{width}. We also define the $q$-neighbourhood of $[\alpha]^w$ as the set of elements in $\cE$ that are at distance at most $q$ from the box. Precisely,
$$
[\alpha]^w_q:=\{\alpha'\in \cE: \ \forall T \in \cT_0, \ \alpha(T)-q \leq \alpha'(T) < \alpha(T)+w+q\}.
$$
Note that in many cases $[\alpha]^w_q$ is itself a box, but for the structure of our argument it will be convenient to think of it as a neighbourhood of the box $[\alpha]^w$ in $\mathcal{E}$, hence our different  notation.

%(TODO: I changed $\delta$ for $q$ since we use $\delta$ as a small constant in the end of the paper. We may want to find a better letter. Then I talk about $q_*$, being the one that does only depend in $\cU_0$. Writing $u_{\max}$ instead of $q_*$ might be longer but more realistic.)

%Finally, for every set $\mathcal{A}_{x}\subseteq \mathcal{A}_n$, we write $a_x=|\mathcal{A}_x|$, and similarly for the set $\mathcal{B}_n$.
Here we show the crucial double counting argument that will allow us to compare the sets $\mathcal{A}_n$ and $\mathcal{B}_n$.
\begin{lemma}[Local double counting lemma]\label{lemma:localSwitching}
There exists a constant $q_*=q_*(\cU_0)\leq u_{\max}$ (and independent from $n$) such that the following is true.
Let $T \in \cT_0$ and let $e$ be an edge of $T$. Let $T_-, U_+, v_-, v_+$ be as above and assume that $T_- \in \cT_0$ and $U_+ \in \cU_0$.

Then for every $\alpha\in \cE$ and $w\geq 1$ one has:
\begin{eqnarray}\label{eq:localSwitchingEq}
m_{T}(e)\cdot  \big(\alpha(T)+w+q_*\big) \cdot \left| \mathcal{A}_{n,[\alpha]^w_{q_*}}\right|
\geq n_{U_+}(v_+)\cdot  m_{T_-}(v_-) \cdot  \alpha(T_-) \cdot \left|\mathcal{B}^{U_+}_{n,[\alpha]^w}\right|\;.
\end{eqnarray}
\end{lemma}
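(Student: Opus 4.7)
The plan is a localized double counting on the bipartite multigraph $H$ with vertex sides $\mathcal{A}_{n,[\alpha]^w_{q_*}}$ and $\mathcal{B}^{U_+}_{n,[\alpha]^w}$, where edges encode the natural ``add-a-bridge'' move turning a two-component forest into a tree. The idea is to realize each edge of $H$ as a labelled configuration and count these configurations in two ways: once from the $\mathcal{B}$-side, matching the right-hand side of~\eqref{eq:localSwitchingEq}, and once from the $\mathcal{A}$-side, matching the left-hand side.

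\emph{Forward map.} Given $G_2 \in \mathcal{B}^{U_+}_{n,[\alpha]^w}$, let $L$ be its large component and $U \cong U_+$ its small component. I would enumerate triples $(p_+, f, p_-)$, where $p_+ \in V(U)$ lies in the orbit of $v_+$ under $\Aut_u(U_+)$ (contributing $n_{U_+}(v_+)$ choices); $f$ is an edge of $L$ with $s(L,f) \cong T_-$ as rooted trees (contributing $\alpha^{G_2}(T_-) \geq \alpha(T_-)$ choices, by definition of $[\alpha]^w$); and $p_-$ lies in the orbit of $v_-$ under $\Aut_r(T_-)$ inside that pendant copy (contributing $m_{T_-}(v_-)$ choices). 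To each triple I associate the pair $(G_1, f)$ with $G_1 := G_2 \cup \{p_+p_-\} \in \mathcal{A}_n$. Summing over $G_2$, the total number of configurations is at least
\[
n_{U_+}(v_+)\cdot m_{T_-}(v_-)\cdot \alpha(T_-)\cdot |\mathcal{B}^{U_+}_{n,[\alpha]^w}|.
\]

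\emph{Backward count.} The key geometric observation is that in $G_1$, removing $f$ separates the tree into $L \setminus F$ on one side and $F \cup U \cup \{p_+p_-\} \cong T$ on the other, rooted at the endpoint of $f$ inside $F$ (which plays the role of the root of $T_-$, hence of $T$); thus $s(G_1, f) \cong T$ as rooted trees. For a fixed pair $(G_1, f)$ with $S := s(G_1, f) \cong T$, the preimages correspond bijectively to edges $e'$ of $S$ such that $(S, e') \cong (T, e)$ as a rooted-tree-with-marked-edge: the decomposition $S \setminus \{e'\} = S_- \sqcup S_+$ then uniquely determines $p_- \in S_-$, $p_+ \in S_+$ and $G_2 = G_1 \setminus \{e'\}$. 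By definition of rooted multiplicity this yields exactly $m_T(e)$ preimages, so the total number of configurations from the $\mathcal{A}$-side is at most
\[
m_T(e)\sum_{G_1 \in \mathcal{A}_{n,[\alpha]^w_{q_*}}} \alpha^{G_1}(T) \;\leq\; m_T(e)\cdot(\alpha(T)+w+q_*)\cdot|\mathcal{A}_{n,[\alpha]^w_{q_*}}|,
\]
using $\alpha^{G_1}(T) < \alpha(T) + w + q_*$. Matching the two counts yields~\eqref{eq:localSwitchingEq}.

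\emph{Main obstacle.} The essential technical point is to establish a constant $q_* = q_*(\cU_0)$ for which the forward map indeed lands in $\mathcal{A}_{n,[\alpha]^w_{q_*}}$. I would argue that passing from $G_2$ to $G_1$ only alters pendant-tree statistics at a bounded number of edges: namely edges of $U$ and the new bridge, together with those edges of $L$ whose pendant side contains $p_-$ and whose pendant tree (before or after the modification) still lies in $\cT_0$. Since $|U_+| \leq u_{\max}$, a direct case analysis shows that each coordinate $\alpha(T')$ changes by at most a constant depending only on $\cU_0$. One additionally needs that for $n$ large enough, attaching $U$ at $p_-$ never flips the ``smaller side'' for pendant edges of $L$ that remain in $\cT_0$; this holds as soon as $n \gg u_{\max}$. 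Taking $q_*$ to be the resulting uniform bound delivers $\alpha^{G_1} \in [\alpha]^w_{q_*}$ whenever $\alpha^{G_2} \in [\alpha]^w$, completing the argument.
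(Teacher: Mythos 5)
Your proposal is correct and follows essentially the same route as the paper: a double counting of add-a-bridge/delete-an-edge configurations between $\mathcal{A}_{n,[\alpha]^w_{q_*}}$ and $\mathcal{B}^{U_+}_{n,[\alpha]^w}$, with the lower bound $n_{U_+}(v_+)\,m_{T_-}(v_-)\,\alpha(T_-)$ per graph on the $\mathcal{B}$-side, the upper bound $m_T(e)\,\alpha^{G_1}(T)$ per graph on the $\mathcal{A}$-side, and $q_*$ chosen as a uniform bound (of order $u_{\max}$) on how much attaching a copy of $U_+$ can shift each coordinate of the $\alpha$-statistics. Your treatment is in fact more explicit than the paper's (which dismisses the degree computations as ``easy'' and ``simple'' arguments), and you correctly flag the only delicate points, namely that the glued side remains the pendant side and that only boundedly many pendant-tree types are affected.
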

\begin{proof}
%TODO: write (and make picture). This one is easy.

The proof of this lemma consists on a simple double counting on the edges of the bipartite graph $H$ with vertex sets $(\mathcal{A}_{n,[\alpha]^w_{q_*}},\mathcal{B}^{U_+}_{n,[\alpha]^w})$ and where $G_1\in \mathcal{A}_{n,[\alpha]^w_{q_*}}$ and $G_2\in \mathcal{B}^{U_+}_{n,[\alpha]^w}$ are joined by an edge if and only if there exists a copy of $T$ in $G_1$ and an edge $e$ in $T$ such that
$G_1-e= G_2$,
$T - e= U_+\cup T_-$ and $s(G_1,e)=U_+$.

On the one hand, the number of edges in $H$ is $|E(H)|= \sum_{G_2\in \mathcal{B}^{U_+}_{n,[\alpha]^w}} \deg_H(G_2)$, where $\deg_H(G)$ denotes the degree of $G$ in $H$. Since $\cG_n$ is a bridge-addable class of graphs, we can add any bridge to $G_2$ and stay in the class. Moreover, if $G_1$ can be obtained from $G_2$ by adding a bridge, then $G_1$ belongs to the $q_*=|U_+|\leq u_{\max}$ neighbourhood of $\alpha$; that is, gluing a tree of size $|U_+|$ can change the statistics of $G_2$ by at most $|U_+|$ in each component (for every $T$, $|\alpha^{G_2}(T)-\alpha^{G_1}(T)|\leq |U_+|$). Then, an easy argument shows that $\deg_H(G_2)=n_{U_+}(v_+) m_{T_-}(v_-)\alpha^{G_2}(T_-)$. Thus,
$$
|E(H)|= \sum_{G_2\in \mathcal{B}^{U_+}_{n,[\alpha]^w}} n_{U_+}(v_+) m_{T_-}(v_-)\alpha^{G_2}(T_-) \geq  n_{U_+}(v_+) m_{T_-}(v_-) \alpha(T_-) \left|\mathcal{B}^{U_+}_{n,[\alpha]^w}\right|\;.
$$

On the other hand, the number of edges in $H$ is $|E(H)|= \sum_{G_1\in \mathcal{A}_{n,[\alpha]^w_{q_*}}} \deg_H(G_1)$. In this case, it suffices to upper bound the degree of $G_1$ into the set $\mathcal{B}^{U_+}_{n,[\alpha]^w}$. A simple argument also shows that
$\deg_H(G_1)\leq m_{T}(e) \alpha^{G_1}(T)$. Thus,
$$
|E(H)|\leq  \sum_{G_1\in \mathcal{A}_{n,[\alpha]^w_{q_*}}}  m_{T}(e) \alpha^{G_1}(T) \leq  m_{T}(e) (\alpha(T)+w+q_*) \left|\mathcal{A}_{n,[\alpha]^w_{q_*}}\right|\;.
$$
The lemma follows from the two previous inequalities.
\end{proof}

\begin{rem}\label{rem:1}
The last lemma is also valid in the following degenerate case. Assume that $T$, viewed as an unrooted tree, is an element of $\mathcal{U}_0$, and let $U_+=T$ (considered as an unrooted tree). Let conventionally
$T_-:=\emptyset$. Then~\eqref{eq:localSwitchingEq} holds with the conventions $\Aut_r(\emptyset)=1$, $\alpha(\emptyset)=n-|T|$, $m_T(e)=1$. In other words, we have, with $v_+$ the root of $T$:
\begin{eqnarray*}
 \big(\alpha(T)+w+q_*\big) \cdot \left| \mathcal{A}_{n,[\alpha]^w_{q_*}}\right|
\geq n_{T}(v_+) (n-|T|) \cdot \left|\mathcal{B}^{T}_{n,[\alpha]^w}\right|\;.
\end{eqnarray*}
The proof is similar: we just consider the bipartite graph structure on $(\mathcal{A}_{n,[\alpha]^w_{q_*}},\mathcal{B}^{U_+}_{n,[\alpha]^w})$ defined by the fact that $G_1\in \mathcal{A}_{n,[\alpha]^w_{q_*}}$ and $G_2\in \mathcal{B}^{T}_{n,[\alpha]^w}$ are joined by an edge if and only if there exists a copy of $T$ in $G_1$ and an edge $e$ in $G_1$ such that $s(G_1,e)=T$. The only thing to note is that for each  $G_2\in \mathcal{B}^{T}_{n,[\alpha]^w}$, there are $n_{T}(v_+) (n-|T|)$ ways to add a bridge to $G_2$ between one of the $n_{T}(v_+)$ allowed vertices of its connected component isomorphic to $T$, to any of the $(n-|T|)$ vertices of its other connected component.
\end{rem}

\begin{lemma}\label{lemma:auto}
With the above notation, we have:
$$
\frac{m_T(e)}{\Aut_r(T)} = \frac{m_{T_-}(v_-) n_{U_+}(v_+)}{\Aut_r(T_-)\Aut_u(U_+)}
$$
\end{lemma}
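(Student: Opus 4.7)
The proof is an orbit--stabilizer computation. My plan is to replace every multiplicity by a ratio (orbit size)$=$(group size)/(stabilizer size), and then produce a group isomorphism between the stabilizer of $e$ in $\mathrm{Aut}_r(T)$ and the product of the stabilizers appearing on the right-hand side.

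\medskip

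\noindent\emph{Step 1 (rewrite via orbit--stabilizer).} The three quantities $m_T(e)$, $m_{T_-}(v_-)$ and $n_{U_+}(v_+)$ are, by definition, the orbit sizes of $e$, $v_-$ and $v_+$ under the actions of $\mathrm{Aut}_r(T)$, $\mathrm{Aut}_r(T_-)$ and $\mathrm{Aut}_u(U_+)$ respectively. Write $S_T(e)$, $S_{T_-}(v_-)$, $S_{U_+}(v_+)$ for the corresponding stabilizer subgroups. Then the claimed identity is equivalent to
\[
|S_T(e)| \;=\; |S_{T_-}(v_-)|\cdot |S_{U_+}(v_+)|.
\]

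\medskip

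\noindent\emph{Step 2 (construct the isomorphism).} The key fact is that the two components of $T-e$ are distinguishable: $T_-$ contains the root of $T$ while $U_+$ does not. Consequently, any $\sigma\in\mathrm{Aut}_r(T)$ preserves this partition of $V(T)\setminus\{\text{image of }e\}$-style decomposition: $\sigma$ must send the root to itself, hence $\sigma(V(T_-))=V(T_-)$ and $\sigma(V(U_+))=V(U_+)$. If in addition $\sigma\in S_T(e)$, then the unordered pair $\{v_-,v_+\}$ is fixed setwise, and since $v_-\in V(T_-)$ and $v_+\in V(U_+)$, the previous sentence forces $\sigma(v_-)=v_-$ and $\sigma(v_+)=v_+$. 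Therefore the restrictions
\[
\sigma_- := \sigma|_{T_-} \in S_{T_-}(v_-), \qquad \sigma_+:=\sigma|_{U_+}\in S_{U_+}(v_+)
\]
are well defined. This gives a map $\Phi: S_T(e) \to S_{T_-}(v_-)\times S_{U_+}(v_+)$, clearly a group homomorphism.

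\medskip

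\noindent\emph{Step 3 (bijectivity).} The map $\Phi$ is injective because $T=T_-\cup U_+\cup\{e\}$ and $\sigma$ is determined by its restriction to each side. For surjectivity, given $(\sigma_-,\sigma_+)$ in the product of stabilizers, define $\sigma$ by gluing: set $\sigma|_{T_-}=\sigma_-$ and $\sigma|_{U_+}=\sigma_+$. Since $\sigma_-$ fixes $v_-$ and $\sigma_+$ fixes $v_+$, the edge $\{v_-,v_+\}=e$ is mapped to itself; since $\sigma_-$ is a rooted automorphism of $T_-$ it fixes the root of $T$. Thus $\sigma\in S_T(e)$ and $\Phi(\sigma)=(\sigma_-,\sigma_+)$. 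Counting gives the desired equality in Step 1, which combined with $m_T(e)=|\mathrm{Aut}_r(T)|/|S_T(e)|$ (and similarly for $m_{T_-}(v_-)$ and $n_{U_+}(v_+)$) yields the lemma.

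\medskip

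\noindent\emph{Where the subtlety lies.} The whole argument hinges on the asymmetry between the two sides of $e$: because $T_-$ carries the root and $U_+$ does not, an automorphism of the rooted tree $T$ cannot swap the two endpoints of $e$. This is exactly why $U_+$ should be regarded as \emph{unrooted} on the right-hand side (no constraint at $v_+$ other than being fixed by the given automorphism), whereas $T_-$ is rooted at the same root as $T$. If one were instead working with unrooted automorphisms of $T$, an extra factor of $2$ could appear when $T_-\cong U_+$, and the identity would need to be corrected accordingly; the rooted setting makes the bookkeeping clean, and this is the only place one has to be careful.
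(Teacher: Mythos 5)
Your proof is correct, but it takes a genuinely different route from the paper. You translate each multiplicity into an orbit size via orbit--stabilizer, reduce the identity to $|S_T(e)|=|S_{T_-}(v_-)|\cdot|S_{U_+}(v_+)|$, and exhibit an explicit isomorphism between the stabilizer of $e$ in $\Aut_r(T)$ and the product of the two stabilizers on the right. The paper instead double-counts labeled structures: it counts labellings of $T$ with a marked edge $e$ in two ways, once as $m_T(e)\cdot|T|!/\Aut_r(T)$ and once by first labelling the disjoint pair $(T_-,U_+)$ in $|T|!/(\Aut_r(T_-)\Aut_u(U_+))$ ways and then choosing the connecting edge in $m_{T_-}(v_-)\,n_{U_+}(v_+)$ ways. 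The two arguments are of course two faces of the same coin (counting labelled objects is orbit-counting for the symmetric group), but yours isolates the structural reason the identity holds --- the stabilizer of a marked edge factors across the cut --- and your closing remark correctly identifies why the rooted/unrooted asymmetry prevents the factor-of-$2$ issue that would arise for unrooted $T$ with $T_-\cong U_+$; the paper's version is shorter and avoids any group theory. One small imprecision in your Step 2: the claim that \emph{any} $\sigma\in\Aut_r(T)$ satisfies $\sigma(V(T_-))=V(T_-)$ merely because it fixes the root is false (a root-fixing automorphism need not fix $e$, and then $\sigma(V(T_-))$ is a component of $T-\sigma(e)$, not of $T-e$). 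The correct order of deduction, which is what you actually use, is: for $\sigma\in S_T(e)$, $\sigma$ permutes the two components of $T-e$, and since only $T_-$ contains the root it cannot swap them; this then forces $\sigma(v_-)=v_-$ and $\sigma(v_+)=v_+$. With that rewording the argument is complete.
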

\begin{proof}
  We will prove the equality by counting the \emph{labeled} rooted trees in $\cT^\ell$ with a marked edge $e$ that are isomorphic to the unlabeled rooted tree $T\in \cT$. Recall that $T$ and $e$ are such that $T-e=T_-\cup U_+$. On the one hand, there are $|T|!/(\Aut_r(T_-)\Aut_u(U_+))$ different ways to label $T_-\cup U_+$. By definition of $m_{T_-}(v_-)$ and $n_{U_+}(v_+)$, for each of these labellings, there are $m_{T_-}(v_-) n_{U_+}(v_+)$ ways to select an edge $e$ to connect $T_-$ and $U_+$, such that we obtain a labelling of the tree $T$ with marked edge $e$ (note that all these choices are inequivalent since we work with a labeled structure). On the other hand, there are $|T|!/\Aut_r(T)$ different labellings of the tree $T$, and by definition each of them gives raise to $m_T(e)$ ways to mark the edge $e$, by the previous construction. Hence $ m_T(e)\cdot |T|!/\Aut_r(T) =m_{T_-}(v_-) n_{U_+}(v_+) \cdot |T|!/(\Aut_r(T_-)\Aut_u(U_+))$, and the lemma is proved.
\end{proof}

The next corollary follows immediately:
\begin{cor}\label{cor:boundforTe}
Let $U\in \mathcal{U}_0$.
Then for any $T,e,T_-,U_+$ as in Definition~\ref{def:multiplicities} such that $U_+\cong U$, and for any $\alpha\in \cE$ and $w\geq 1$,
 one has:
$$
\Aut_u(U)\cdot 
\left|\mathcal{B}_{n,[\alpha]^w}^U\right|
 \leq
\frac{\Aut_r(T)}{\Aut_r(T_-)} \cdot\frac{\alpha(T)+w+q_*}{\alpha(T_-)}
\cdot
\left|\mathcal{A}_{n,[\alpha]^w_{q_*}}\right|,
$$
where $q_*=q_*(\cU_0)$ is the constant obtained in Lemma~\ref{lemma:localSwitching}.
%where $c=\|\beta\|_\infty$ is the width of $[\alpha]^w$.
\end{cor}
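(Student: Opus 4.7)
The plan is to deduce the corollary by a direct algebraic combination of Lemma~\ref{lemma:localSwitching} and Lemma~\ref{lemma:auto}; once the edge/vertex multiplicities are translated into automorphism counts via Lemma~\ref{lemma:auto}, the inequality of Lemma~\ref{lemma:localSwitching} is essentially the desired bound up to simple rearrangement.

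Concretely, I would first rewrite the identity of Lemma~\ref{lemma:auto} in the form
$$
m_T(e) \;=\; \frac{\Aut_r(T)}{\Aut_r(T_-)\,\Aut_u(U_+)} \cdot m_{T_-}(v_-)\, n_{U_+}(v_+),
$$
and substitute this into the left-hand side of inequality~\eqref{eq:localSwitchingEq}. The product $m_{T_-}(v_-)\, n_{U_+}(v_+)$ then appears as a common positive factor on both sides of the inequality (these multiplicities are at least one, since every vertex lies in the orbit of itself) and can therefore be cancelled. This leaves
$$
\frac{\Aut_r(T)}{\Aut_r(T_-)\,\Aut_u(U_+)} \cdot \big(\alpha(T)+w+q_*\big) \cdot \left|\mathcal{A}_{n,[\alpha]^w_{q_*}}\right| \;\geq\; \alpha(T_-) \cdot \left|\mathcal{B}^{U_+}_{n,[\alpha]^w}\right|.
$$

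To conclude, I would multiply both sides by $\Aut_u(U_+) = \Aut_u(U)$ (using the hypothesis $U_+\cong U$) and, assuming $\alpha(T_-)>0$, divide by $\alpha(T_-)$ to obtain the stated inequality. The only mild subtlety is the edge case $\alpha(T_-)=0$: in that situation Lemma~\ref{lemma:localSwitching} already forces $|\mathcal{B}^{U_+}_{n,[\alpha]^w}|=0$, so the corollary is trivially true (one may interpret its right-hand side as $+\infty$). There is no genuine obstacle in this step; all the combinatorial content was established in Lemmas~\ref{lemma:localSwitching} and~\ref{lemma:auto}, and the corollary amounts to packaging their outputs in a form ready for the optimization arguments of Section~\ref{sec:partitionfunctions}.
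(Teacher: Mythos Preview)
Your proof is correct and follows exactly the approach intended by the paper, which simply states that the corollary ``follows immediately'' from Lemmas~\ref{lemma:localSwitching} and~\ref{lemma:auto}; you have supplied the straightforward algebraic details.

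One small correction: your claim that ``Lemma~\ref{lemma:localSwitching} already forces $|\mathcal{B}^{U_+}_{n,[\alpha]^w}|=0$'' when $\alpha(T_-)=0$ is not right --- in that case the right-hand side of~\eqref{eq:localSwitchingEq} vanishes and the inequality becomes vacuous rather than restrictive. Your parenthetical alternative (interpreting the right-hand side of the corollary as $+\infty$) is the correct way to handle this degenerate case, and indeed the paper's subsequent use of the corollary (Lemma~\ref{lemma:Bound}) implicitly works with the rearranged form that keeps $\alpha(T_-)$ as a multiplicative factor, avoiding the issue entirely.
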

\noindent Observe that the last corollary also holds in degenerate case $U_+=T$, $T_-=\emptyset$, with the notation of Remark~\ref{rem:1}.

\subsection{Inductive bounds and tree weights}\label{sec:weights}

In this subsection we iterate the bound of Corollary~\ref{cor:boundforTe} to obtain, for each $\alpha \in \mathcal{E}$ and $T\in \mathcal{T}_0$, a lower bound on $\alpha(T)$ in terms of the ratios $\left|\mathcal{B}_{n,[\alpha]^w}^U\right| / \left|\mathcal{A}_{n,[\alpha]^w_{q_*}}\right|$ for different $U\in \mathcal{U}_0$. We then use inequality~\eqref{eq:constraintSumAlphaDiscrete} to conclude that a certain functional of these ratios is bounded (Corollary~\ref{cor:sumBound}). This is the main combinatorial step towards proving a bound on the sum of these ratios, which is the quantity revelant to prove Proposition~\ref{prop:AvsB} (this will be done in the next section).

Let $T\in \cT$ be a rooted tree. A \emph{$\cU_0$-admissible decomposition  of $T$} is an increasing sequence $\mathbf{T}=(T_i)_{i\leq \ell}$ of labeled trees $$T_1\subset \dots \subset T_\ell =T$$ for some $\ell \geq 1$ called the \emph{length}, such that $T_1\in \mathcal{U}_0$ and that for each $2\leq i \leq\ell$, $T_{i}$ is obtained by joining $T_{i-1}$ by an edge $e_i$ to some tree $U_i\in \cU_0$.

By the choices made at the beginning of Section~\ref{subsec:localparams} for $\cU_0$ (that contains the tree of size one) and $\cT_0$ (that it is closed by inclusion), if $T\in \cT_0$, then $T$ has at least one $\cU_0$-admissible decomposition such that $T_i\in \cT_0$ for every $1\leq i \leq\ell$.

Fix $\alpha\in \cE$, $w\geq 1$ and let $q_*=q_*(\cU_0)$ be the constant obtained from Lemma~\ref{lemma:localSwitching}. Throughout this subsection we will focus on the box $[\alpha]^w$. Let $\Lambda=(\mathbb{R}_+)^{\cU_0}$. We define the vector $\mathbf{z}_{n,\alpha}=\mathbf{z}_{n,[\alpha]^w_{q_*}} = (z_{n,\alpha}^U)_{U \in \cU_0}\in \Lambda$ by:
\begin{align}\label{eq:defzn}
z_{n,\alpha}^U:=\Aut_u(U) \frac{|\mathcal{B}^U_{n,[\alpha]^w}|}{|\mathcal{A}_{n,[\alpha]^w_{q_*}}|} \left(1-\frac{|U|}{n}\right).
\end{align}
Observe that if $|\mathcal{B}^U_{n,[\alpha]^w}|>0$, then, since the class of graphs is bridge-addable, we have $|\mathcal{A}_{n,[\alpha]^w_{q_*}}|>0$. If $|\mathcal{B}^U_{n,[\alpha]^w}|=0$, then we set $z^U_{n,[\alpha]^w}:=0$.

For any $\mathbf{z}\in \Lambda$, any $T\in \cT$ and any $\cU_0$-admissible decomposition of it $\mathbf{T}=(T_i)_{i\leq \ell}$, the \emph{weight} of $\mathbf{T}$ with respect to $\mathbf{z}$ is defined as $\omega (\mathbf{T}, \mathbf{z}) =\prod_{i=1}^{\ell} z^{U_i}$, where $U_i=T_i\setminus T_{i-1}$ as an unrooted tree~(here we use the convention $T_0=\emptyset$).
\begin{figure}[h!!]
\begin{center}
\includegraphics[width=0.6\linewidth]{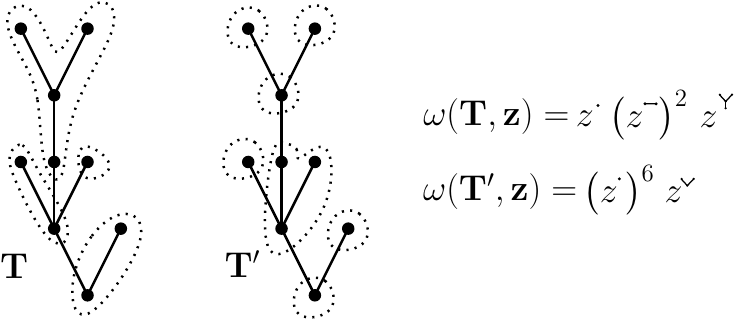}
\caption{Two $\mathcal{U}_0$-admissible decompositions $\mathbf{T}$ and $\mathbf{T}'$ of the same tree, and the corresponding weights. In this case we assume that the trees
 \protect\raisebox{-1mm}{\protect\includegraphics[scale=1.4]{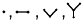}}
 belong to $\mathcal{U}_0$.}
\label{fig:decomposition}.
\end{center}
\end{figure}

\begin{lemma}\label{lemma:Bound}
For any $T \in \cT$ and any $\cU_0$-admissible decomposition $\mathbf{T}$ of $T$ of length $\ell$, one has:
$$
\frac{\alpha(T)}{n} \geq \frac{\omega(\mathbf{T},\mathbf{z}_{n,\alpha})}{\Aut_r(T)} -\frac{(w+q_*) (2|T|)^{\ell-1}}{n}\;.
$$
\end{lemma}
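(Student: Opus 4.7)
The plan is to prove the bound by induction on the length $\ell$ of the admissible decomposition, iterating Corollary~\ref{cor:boundforTe} at each step and using Remark~\ref{rem:1} both for the base case and (crucially) to control the error terms. For the base case $\ell=1$, one has $T=T_1\in\mathcal{U}_0$, and Remark~\ref{rem:1} together with the orbit--stabilizer identity $\Aut_u(T)=n_T(v_+)\Aut_r(T)$ and the definition of $z^T$ (which already absorbs the factor $(n-|T|)/n$) immediately yields $\alpha(T)/n\geq z^T/\Aut_r(T)-(w+q_*)/n$, matching the stated bound with $(2|T|)^0=1$.

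For the inductive step, I would set $\hat{\alpha}_i:=\alpha(T_i)\Aut_r(T_i)/n$, so that Corollary~\ref{cor:boundforTe} applied at step $i$ (after using $\Aut_u(U_i)|\mathcal{B}^{U_i}|/|\mathcal{A}|\geq z^{U_i}$) takes the clean recursive form $\hat{\alpha}_i\geq \hat{\alpha}_{i-1}\,z^{U_i}-\Aut_r(T_i)(w+q_*)/n$. Plugging the inductive hypothesis for $\hat{\alpha}_{\ell-1}$ into this recursion at $i=\ell$ and dividing by $\Aut_r(T_\ell)$ yields
\[
\frac{\alpha(T_\ell)}{n} \geq \frac{\omega(\mathbf{T}_\ell)}{\Aut_r(T_\ell)} - \frac{w+q_*}{n}\left(r_\ell\,(2|T_{\ell-1}|)^{\ell-2}+1\right),
\]
where $r_\ell:=\Aut_r(T_{\ell-1})\,z^{U_\ell}/\Aut_r(T_\ell)$.

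The main obstacle is obtaining a sharp enough bound on $r_\ell$: simply rearranging Corollary~\ref{cor:boundforTe} only controls $r_\ell$ in terms of $\alpha(T_{\ell-1})$, which could a priori be $0$. The key trick is to apply the degenerate case Remark~\ref{rem:1} to $U_\ell$ itself --- legitimate because $U_\ell\in\mathcal{U}_0$ --- giving $|\mathcal{B}^{U_\ell}|/|\mathcal{A}|\leq (\alpha(U_\ell)+w+q_*)/(n_{U_\ell}(v_+)(n-|U_\ell|))$. Substituting this and using Lemma~\ref{lemma:auto} to rewrite $\Aut_r(T_{\ell-1})/\Aut_r(T_\ell)$ in terms of multiplicities makes the factors $n_{U_\ell}(v_+)$, $(n-|U_\ell|)$ and $\Aut_u(U_\ell)$ cancel cleanly, leaving $r_\ell\leq 2\,m_{T_{\ell-1}}(v_-)/m_{T_\ell}(e_\ell)\leq 2|T_{\ell-1}|$, where one uses $\alpha(U_\ell)+w+q_*\leq 2n$ (valid for $n$ large, otherwise the claim is vacuous) and the trivial bounds $m_{T_{\ell-1}}(v_-)\leq|T_{\ell-1}|$ and $m_{T_\ell}(e_\ell)\geq 1$.

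Plugging $r_\ell\leq 2|T_{\ell-1}|$ into the bracket gives $r_\ell(2|T_{\ell-1}|)^{\ell-2}+1\leq (2|T_{\ell-1}|)^{\ell-1}+1\leq (2|T_\ell|)^{\ell-1}$, the last inequality being a one-line binomial estimate using $|T_\ell|\geq|T_{\ell-1}|+1$. This closes the induction and establishes the claim.
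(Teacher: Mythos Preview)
Your argument follows the same inductive scheme as the paper's (base case via Remark~\ref{rem:1}, inductive step via Corollary~\ref{cor:boundforTe}), and it is correct. The one place where you diverge is in controlling the propagated error: the paper simply writes the bracket as $1+\tfrac{\Aut_r(T_-)(2|T|)^{\ell-2}}{\Aut_r(T)}$ and then uses $\Aut_r(T_-)/\Aut_r(T)\leq|T|$, whereas the honest bracket carries an extra factor $z^{U_\ell}$ in front of the second summand. Your additional application of Remark~\ref{rem:1} to $U_\ell$ itself, combined with Lemma~\ref{lemma:auto} so that $n_{U_\ell}(v_+)$, $\Aut_u(U_\ell)$ and $(n-|U_\ell|)$ cancel, is exactly what is needed to bound $r_\ell=z^{U_\ell}\Aut_r(T_{\ell-1})/\Aut_r(T_\ell)\leq 2|T_{\ell-1}|$ rigorously. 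In this sense you are patching a small gap in the paper's writeup rather than taking a different route.

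Two minor caveats. First, your use of Remark~\ref{rem:1} on $U_\ell$ rooted at $v_+$ tacitly assumes that this rooting lies in $\cT_0$ (so that $\alpha(\cdot)$ is defined there); this is not among the standing hypotheses of Section~\ref{subsec:localparams}, but it holds once $\cT_0$ is chosen in Section~\ref{sec:finishproof} to contain all rooted trees up to size $k_*\geq u_{\max}$. Second, the parenthetical ``otherwise the claim is vacuous'' is not quite right---the bound is not automatically trivial for small $n$---but since $w$ and $q_*$ are fixed constants and the lemma is only ever applied with $n$ large, the requirement $\alpha(U_\ell)+w+q_*\leq 2n$ is harmless.
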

\begin{proof}
Let $ T_1\subset \dots \subset T_\ell =T$ be the $\cU_0$-admissible decomposition $\mathbf{T}$.
We will show the statement using induction on the length of $\mathbf{T}$.

If $\ell=1$, then $T$ is a rooted copy of $U$ for some unrooted tree $U\in \cU_0$. By Remark~\ref{rem:1} we have,
\begin{align*}
\frac{\alpha(T)}{n}&\geq  \frac{\Aut_u(U)\Aut_r(\emptyset)\alpha(\emptyset)\left|\mathcal{B}_{n,[\alpha]^w}^U\right|}{\Aut_r(T)n\left|\mathcal{A}_{n,[\alpha]^w_{q_*}}\right|} -\frac{w+q_*}{n}
=\frac{z^U_{n,\alpha}}{\Aut_r(T)} -\frac{w+q_*}{n}
=\frac{\omega(\mathbf{T},\mathbf{z}_{n,\alpha})}{\Aut_r(T)} -\frac{w+q_*}{n}\;,
\end{align*}
where we used the conventions $\Aut_r(\emptyset)=1$ and $\alpha(\emptyset)=n-|U|$ and the definition of $z^U_{n,\alpha}$.

Let us assume that the inequality is true for every tree $T_-\in \cT$ and for every $\cU_0$-admissible decomposition of $T_-$ of size at most $\ell-1$. Let $\mathbf{T}_-$ be the admissible decomposition induced by $\mathbf{T}$ in $T_-=T_{\ell-1}$. Then letting $U=T \setminus T_-$, we have by Lemma~\ref{lemma:auto}:
$$
\Aut_r(T_-)=  \frac{m_{T_-}(v_-)n_{U}(v_+)}{m_T(e)\Aut_u(U)}\cdot \Aut_r(T)\leq |T|\Aut_r(T)\;,
$$
since $n_{U}(v_+)\leq \Aut_u(U)$ and $m_{T_-}(v_-)\leq |T_-|\leq |T|$.

By using Corollary~\ref{cor:boundforTe} and the induction hypothesis on $T_-$ we obtain
\begin{align*}
\frac{\alpha(T)}{n}&\geq  \frac{\Aut_u(U)\Aut_r(T_-)|\mathcal{B}_{n,[\alpha]^w}^U|}{\Aut_r(T)n|\mathcal{A}_{n,[\alpha]^w_{q_*}}|}\alpha(T_-) -\frac{w+q_*}{n} \\
&=\frac{z^U_{n,\alpha} \Aut_r(T_-)}{\Aut_r(T)}\cdot \frac{\alpha(T_-)}{n-|U|} -\frac{w+q_*}{n} \\
&\geq \frac{z^U_{n,\alpha} \omega(\mathbf{T}_-,\mathbf{z}_{n,\alpha})}{\Aut_r(T)} -\frac{w+q_*}{n}\left(1+\frac{\Aut_r(T_-)(2|T|)^{\ell-2}}{\Aut_r(T)}\right) \\
& \geq \frac{\omega(\mathbf{T},\mathbf{z}_{n,\alpha})}{\Aut_r(T)} -\frac{(w+q_*) (2|T|)^{\ell-1}}{n}\;.
\end{align*}
\end{proof}

\begin{definition}
For any $\mathbf{z}\in \Lambda$ and any $T\in\cT$, we define its \emph{maximum weight} with respect to $\mathbf{z}$, denoted by $\omega(T,\mathbf{z})$, to be the largest weight $\omega (\mathbf{T}, \mathbf{z})$, where $\mathbf{T}$ is a $\cU_0$-admissible decomposition of~$T$.
Note that $\omega(T,\mathbf{z})$ is well defined since each tree $T$ has at least one $\cU_0$-admissible decomposition\footnote{Here we do a slight abuse of notation by using $\omega(\mathbf{T},\mathbf{z})$ and $\omega(T,\mathbf{z})$ to denote, respectively, the weight of a given decomposition $\mathbf{T}$ and the maximum weight of a decomposition of a given tree $T$.}.
\end{definition}

We introduce the following weighted sum, for $\mathbf{z}\in \Lambda$:
\begin{align*}%\label{eq:defYT0}
Y_{\mathcal{T}_0}(\mathbf{z}):=\sum_{T\in\cT_0} \frac{\omega(T, \mathbf{z})}{\Aut_r(T)}.
\end{align*}
Then we immediately have from the previous lemma:
\begin{cor}\label{cor:sumBound}
Assume that $\alpha\in \cE$ is such that $\sum_{T\in\cT_0} \alpha(T) \leq n-1$. Then one has
\begin{eqnarray}\label{eq:sumBound}
Y_{\mathcal{T}_0}(\mathbf{z}_{n,\alpha}) \leq 1+ \frac{C}{n},
\end{eqnarray}
where $C=(w+q_*)(2t_{max})^{t_{max}-1} |\cT_0|$ is a constant depending only on $\cT_0$, $\cU_0$ and $w$ (but not on $n$).
\end{cor}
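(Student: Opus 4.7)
\medskip
\noindent\emph{Proof proposal.} The plan is to apply Lemma~\ref{lemma:Bound} once for each $T\in\cT_0$, using a $\cU_0$-admissible decomposition that realizes the maximum weight $\omega(T,\mathbf{z}_{n,\alpha})$, and then sum the resulting inequalities over $T\in\cT_0$. The global constraint $\sum_{T\in\cT_0}\alpha(T)\leq n-1$ will then absorb the left-hand side, leaving only the accumulated error terms, which are easily controlled because $\cT_0$ is finite and all trees in it have bounded size.

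More precisely, I would first fix, for each $T\in\cT_0$, a $\cU_0$-admissible decomposition $\mathbf{T}^{\star}=(T_i^{\star})_{i\leq\ell(T)}$ of $T$ such that $\omega(\mathbf{T}^{\star},\mathbf{z}_{n,\alpha})=\omega(T,\mathbf{z}_{n,\alpha})$. Since each $T_i^{\star}$ is a rooted subtree of $T$ containing its root, and $\cT_0$ is closed under rooted inclusion, we automatically have $T_i^{\star}\in\cT_0$ for every $i$, so that Lemma~\ref{lemma:Bound} is applicable along this decomposition. Plugging in yields
\[
\frac{\alpha(T)}{n}\;\geq\;\frac{\omega(T,\mathbf{z}_{n,\alpha})}{\Aut_r(T)}\;-\;\frac{(w+q_*)(2|T|)^{\ell(T)-1}}{n}.
\]

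To bound the error term uniformly, I would use the two elementary inequalities $|T|\leq t_{\max}$ (since $T\in\cT_0$) and $\ell(T)\leq|T|\leq t_{\max}$ (since every block $U_i$ added in a $\cU_0$-admissible decomposition contributes at least one vertex). Consequently $(2|T|)^{\ell(T)-1}\leq(2t_{\max})^{t_{\max}-1}$, and the error term for each $T$ is at most $(w+q_*)(2t_{\max})^{t_{\max}-1}/n$.

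Summing the inequality over all $T\in\cT_0$ and using the hypothesis $\sum_{T\in\cT_0}\alpha(T)\leq n-1$, the left-hand side is at most $(n-1)/n<1$, while the right-hand side is $Y_{\cT_0}(\mathbf{z}_{n,\alpha})-C/n$ with $C=(w+q_*)(2t_{\max})^{t_{\max}-1}|\cT_0|$. Rearranging gives $Y_{\cT_0}(\mathbf{z}_{n,\alpha})\leq 1+C/n$, as required. I do not anticipate any significant obstacle here: the real work has already been done in Lemma~\ref{lemma:Bound}, and the only point requiring care is to guarantee that the chosen maximizing decomposition lives entirely inside $\cT_0$, which is precisely what the closure hypothesis on $\cT_0$ was introduced for.
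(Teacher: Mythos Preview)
Your proposal is correct and follows exactly the same approach as the paper, which simply says the result is obtained by summing the bound of Lemma~\ref{lemma:Bound} for the $\cU_0$-admissible decomposition of maximum weight over all $T\in\cT_0$. Your version supplies the details the paper omits (the uniform control of the error term via $|T|\leq t_{\max}$ and $\ell(T)\leq t_{\max}$, and the observation that closure of $\cT_0$ under rooted inclusion keeps the maximizing decomposition inside $\cT_0$), but there is no methodological difference.
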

\begin{proof}
This is proved by summing the upper bound of Lemma~\ref{lemma:Bound} for the $\cU_0$-admissible decomposition that gives the maximum weight and over all $T\in \cT_0$.
\end{proof}

\section{Partition functions and optimization}\label{sec:partitionfunctions}

In Section~\ref{sec:local} we have obtained, for each $\alpha \in \mathcal{E}$, a bound  on a functional of the ratios $z^U_{n,\alpha}$ for $U\in\cU_0$ (Corollary~\ref{cor:sumBound}). Note that this functional, namely $Y_{\mathcal{T}_0}(\mathbf{z})$, resembles a truncated version of a partition function\footnote{Here we prefer to use the terminology \emph{partition function} rather than \emph{generating function}, since the second terminology usually refers to \emph{formal} power series. Note that here, since the decomposition of maximal weight underlying the definition of the numbers $\omega(T,\mathbf{z})$ depends on $\mathbf{z}$, the quantity $\omega(T,\mathbf{z})$ is only \emph{piecewise} polynomial in the $z^U$, $U\in \mathcal{U}_0$, so that $Y_{\mathcal{T}_0}(\mathbf{z})$ is \emph{not} a formal power series in $\mathbf{z}$.} of trees weighted by their maximal weight. In this section we are going to use this fact to prove that some other functional of the $z^U_{n,\alpha}$, which we are directly interested in to prove Proposition~\ref{prop:AvsB}, is bounded.

\medskip

In this section $\cU_0$ and $\epsilon >0$ are \emph{fixed}.

\subsection{Partition functions of rooted and unrooted trees}
\label{subsec:defspartition}

In this subsection we consider an infinite version of the partition function appearing in the L.H.S. of \eqref{eq:sumBound}, where the summation on $\mathcal{T}_0$ is replaced by a summation on the set of all rooted trees $\mathcal{T}$. We are going to show that if this partition function is finite, then the unrooted version of this partition function is at most $\frac{1}{2}$ (Lemma~\ref{lemma:disymmetry}).

Recall that for any rooted tree $T$, and $\mathbf{z}\in \Lambda$, we defined $\omega(T,\mathbf{z})$ as the maximum weight of an $\cU_0$-admissible decomposition of $T$.
For any $\mathbf{z} \in \Lambda$, we let $Y(\mathbf{z}) \in \mathbb{R}_+\cup \{\infty\}$ be defined by the following infinite sum:
$$
Y(\mathbf{z}) := \sum_{T\in \cT} \frac{\omega(T,\mathbf{z})}{\mathrm{Aut}_r(T)},
$$
where the sum is taken over \emph{all} rooted (unlabeled) trees. Note that, by double-counting, this sum is also equal to the following sum, taken on all rooted \emph{labeled} trees:
$$
Y(\mathbf{z}) = \sum_{T\in \cT^\ell} \frac{\omega(T,\mathbf{z})}{|T|!}.
$$
In words, $Y(\mathbf{z})$ is the exponential partition function of all rooted labeled trees, counted with their maximum weight. We let $D$ be the domain of convergence of this sum:
$$
D:=\{\mathbf{z} \in \Lambda, Y(\mathbf{z})<\infty\}.
$$
\begin{lemma}
$D$ is closed downwards for the product order (i.e. for any $\mathbf{z},\mathbf{z'}\in \Lambda$ such that $(z')^U\leq z^U$ for every $U\in \cU_0$, if $\mathbf{z}\in D$ then $\mathbf{z}'\in D$) and $D$ is bounded.
\end{lemma}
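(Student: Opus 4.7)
The first claim (downward closure) is essentially immediate from monotonicity. By its very definition $\omega(\mathbf{T},\mathbf{z}) = \prod_{i=1}^\ell z^{U_i}$ is coordinate-wise non-decreasing in $\mathbf{z}$, and both taking a maximum over admissible decompositions and summing over $T \in \cT$ preserve that monotonicity. Hence whenever $\mathbf{z}' \leq \mathbf{z}$ in the product order, $Y(\mathbf{z}') \leq Y(\mathbf{z})$, and $\mathbf{z} \in D$ forces $\mathbf{z}' \in D$. For boundedness I would proceed coordinate by coordinate, which suffices because $\cU_0$ is finite. Writing $\star$ for the single-vertex tree of $\cU_0$, every rooted tree $T$ admits the trivial admissible decomposition that adds one vertex at a time (each $U_i = \star$), so $\omega(T,\mathbf{z}) \geq (z^\star)^{|T|}$. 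Using the labeled expression of $Y$ together with Cayley's formula for the $n^{n-1}$ rooted labeled trees on $n$ vertices and Stirling's estimate $n^{n-1}/n! \sim e^n/(n^{3/2}\sqrt{2\pi})$, the lower bound
$$Y(\mathbf{z}) \geq \sum_{n\geq 1}\frac{n^{n-1}}{n!}(z^\star)^n$$
diverges whenever $z^\star > 1/e$. This gives $z^\star \leq 1/e$ on $D$.

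For the remaining coordinates, fix $U \in \cU_0$ with $|U| = m \geq 2$ and consider the \emph{blowup construction} of rooted labeled trees on $km$ vertices: take a rooted labeled tree $S$ on $k$ vertices (there are $k^{k-1}$ of them), a distribution of $\{1,\dots,km\}$ into $k$ labeled chunks of size $m$ attached to the vertices of $S$, a labeled copy of $U$ on each chunk (giving $m!/\Aut_u(U)$ options per chunk), a choice of bridge endpoints ($m^2$ per edge of $S$) and a root inside the chunk at the root of $S$. Each such construction produces a tree admitting a $\cU_0$-admissible decomposition of weight exactly $(z^U)^k$, obtained by processing $S$ from the root and adding one copy of $U$ at a time. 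Counting the corresponding triples and then dividing by a crude bound on the multiplicity with which each labeled rooted tree is produced (bounded by $k! \cdot \binom{km-1}{k-1}$, the second factor being an over-count for the number of ways to choose a set of $k-1$ edges that could play the role of bridges), one obtains after Stirling a lower bound of the shape
$$Y(\mathbf{z}) \geq c_U \sum_{k \geq 1} \frac{(\kappa_U \, z^U)^k}{k^{3/2}},$$
for explicit positive constants $c_U, \kappa_U$ depending only on $U$. This diverges as soon as $z^U > 1/\kappa_U$, so $z^U$ is bounded on $D$. Combining the bounds for all $U\in\cU_0$ gives boundedness of $D$.

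The main obstacle here is the over-counting in the blowup construction: a given labeled rooted tree on $km$ vertices arises from several $(S, \text{chunk labelling}, \text{bridges})$ triples, both because one can relabel the $k$ copies of $U$ (accounting for a $k!$ factor) and because a single tree may admit several decompositions into $k$ edge-disjoint chunks isomorphic to $U$. The soft combinatorial bound $\binom{km-1}{k-1} \leq 2^{km}$ on the number of such chunk decompositions only contributes an exponential-in-$k$ factor, which gets absorbed into the constant $\kappa_U$ without destroying the divergence; this is possible precisely because the blowup contributes the much larger factor $k^{k-1}/k! \sim e^k/(k^{3/2}\sqrt{2\pi})$ from Cayley's formula for meta-trees. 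No finer counting is required since we only need boundedness of $D$ and not an optimal constant.
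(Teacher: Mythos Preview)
Your proof is correct and rests on the same idea as the paper's: for each $U\in\cU_0$, lower-bound $Y(\mathbf{z})$ by building trees out of $k$ copies of $U$ arranged according to a Cayley meta-tree, so that the $k^{k-1}/k!$ factor forces divergence once $z^U$ is large enough. The paper's execution is shorter for two reasons. First, it uses the downward closure just proved to reduce boundedness to showing that, for each fixed $U$, the single point with $z^U=|U|!$ and all other coordinates zero lies outside $D$; since every admissible decomposition then has weight either $0$ or $(z^U)^k$, there is no need to lower-bound $\omega(T,\mathbf{z})$ by a particular decomposition at a generic $\mathbf{z}$. Second, the paper does not isolate the single-vertex tree: the same construction with $|U|=1$ \emph{is} the trivial one-vertex-at-a-time decomposition you used for $\star$, so a single argument covers all $U\in\cU_0$. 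Your explicit multiplicity bound $k!\cdot\binom{km-1}{k-1}$ is a careful way of handling the over-counting that the paper absorbs into its terser count; both routes give an exponential-in-$k$ correction that does not affect divergence.
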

\begin{proof}
The first assumption is straightforward, so to prove the second one it is enough to see that for each $U\in \cU_0$ we have $Y(\mathbf{z})=\infty$ where $\mathbf{z}$ is zero everywhere except for $z^U=|U|!$. We can construct a labeled tree of size $n|U|$ by attaching successively $n$ copies of $U$ by edges. The number of distinct ways to do that is equal to the number of rooted labeled trees of size $n$, which is $n^{n-1}$, times the number of ways to distribute the labels in the different copies of $U$, which is at least $\frac{1}{n!}\binom{n|U|}{|U|,\dots,|U|}= \frac{(n|U|)!}{n!(|U|!)^n}$.
Of course we do not obtain all trees of size $n|U|$ with this construction, but this is enough to obtain the lower bound:
$$
Y(\mathbf{z})\geq \sum_{n\geq 1} \frac{\frac{1}{n!}\binom{n|U|}{|U|,\dots,|U|} n^{n-1}}{(n|U|)!} (z^U)^n =
\sum_{n\geq 1} \frac{n^{n-1}}{n!(|U|!)^n} (z^U)^n = \sum_{n\geq 1} \frac{n^{n-1}}{n!}\;.
$$
The last sum is divergent, which concludes the proof of the claim.
\end{proof}

We note that $\omega(T,\mathbf{z})$ does not depend on the root of $T$, so this quantity is well defined for \emph{unrooted} trees $U\in \cU$. We can thus introduce the ``unrooted version'' of the partition function $Y$:
$$
Y^u(\mathbf{z}) := \sum_{U\in \cU} \frac{\omega(U,\mathbf{z})}{\mathrm{Aut}_u(U)}
= \sum_{U\in \cU^\ell} \frac{\omega(U,\mathbf{z})}{|U|!}.
$$
Note that $Y^u(\mathbf{z})$ is also given by the following expressions:
$$
Y^u(\mathbf{z})
= \sum_{T\in \cT^\ell} \frac{\omega(T,\mathbf{z})}{|T|\cdot|T|!}
= \sum_{T\in \cT} \frac{\omega(T,\mathbf{z})}{|T|\cdot \mathrm	{Aut}_r(T)}.
$$
It is clear that $Y^u(\mathbf{z})\leq Y(\mathbf{z})$ and in particular $Y^u(\mathbf{z}) <\infty$ if $\mathbf{z} \in D$.

The following statement, which is a variant in our context of the celebrated \emph{dissymmetry theorem} (see~\cite{quebecois}), is where the constant $\frac{1}{2}$ from our main theorem (Theorem~\ref{thm:main}) appears:
\begin{lemma}[Supermultiplicative dissymmetry theorem]\label{lemma:disymmetry}
If $\mathbf{z} \in D$, then the rooted series and unrooted series are related by the following inequality:
\begin{eqnarray}\label{eq:disymmetry}
Y(\mathbf{z})- Y^u(\mathbf{z}) \geq \frac{1}{2} Y(\mathbf{z})^2.
\end{eqnarray}
In particular for all $\mathbf{z} \in D$ one has
\begin{eqnarray}\label{eq:magicbound}
Y^u(\mathbf{z})\leq \frac{1}{2}.
\end{eqnarray}
\end{lemma}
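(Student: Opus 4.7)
The plan is to reduce inequality \eqref{eq:magicbound} to the stronger inequality \eqref{eq:disymmetry}, which I would prove by combining a (super)multiplicativity property of $\omega(\cdot,\mathbf{z})$ with a ``cut-the-edge'' bijection on labeled trees. Indeed, once \eqref{eq:disymmetry} is proved, \eqref{eq:magicbound} is just the elementary fact that $Y-Y^2/2 \le 1/2$ for all real $Y$ (maximum at $Y=1$).

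The main structural observation I would make first is that a $\cU_0$-admissible decomposition of a tree $T$ is essentially a partition of $V(T)$ into parts, each inducing a subtree of $T$ isomorphic (as an unrooted tree) to some element of $\cU_0$. Any ordering of such a partition that respects connectivity at each step gives a valid decomposition, and the weight $\prod_i z^{U_i}$ only depends on the multiset of parts. Two consequences: (i) $\omega(T,\mathbf{z})$ is genuinely an invariant of $T$ as an unrooted tree (as the paper already noted), and (ii) \emph{supermultiplicativity}: if $T = T_1 \cup \{e\} \cup T_2$ is obtained by gluing two trees $T_1,T_2$ along an edge $e$, then concatenating a maximum-weight partition of $T_1$ with one of $T_2$ yields a valid partition of $T$ (the edge $e$ simply becomes one of the edges between parts), hence
\[
\omega(T,\mathbf{z}) \;\geq\; \omega(T_1,\mathbf{z})\,\omega(T_2,\mathbf{z}).
\]

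Next I would rewrite the left-hand side of \eqref{eq:disymmetry} using the two formulas given for $Y^u$: grouping labeled rooted trees by their underlying unrooted tree,
\[
Y(\mathbf{z}) - Y^u(\mathbf{z}) \;=\; \sum_{U\in\cU^\ell} \frac{(|U|-1)\,\omega(U,\mathbf{z})}{|U|!} \;=\; \sum_{(U,e)} \frac{\omega(U,\mathbf{z})}{|U|!},
\]
where the last sum ranges over pairs consisting of a labeled unrooted tree $U$ and a distinguished edge $e$ of $U$ (using $|U|-1 = \#E(U)$). Now the classical ``cut $e$'' bijection sends such a pair to an \emph{unordered} pair $\{T_1,T_2\}$ of rooted labeled trees on disjoint label sets, each rooted at its endpoint of $e$; conversely any such unordered pair is joined by adding an edge between the two roots. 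Using supermultiplicativity and passing from unordered to ordered pairs (a factor $1/2$, legitimate since $T_1,T_2$ have disjoint label sets hence are distinguishable), I get
\[
Y - Y^u \;\geq\; \frac{1}{2}\sum_{\substack{(T_1,T_2)\in (\cT^\ell)^2 \\ \text{disjoint labels}}} \frac{\omega(T_1,\mathbf{z})\,\omega(T_2,\mathbf{z})}{(|T_1|+|T_2|)!}.
\]

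Finally, writing $y_n$ for the sum of $\omega(T,\mathbf{z})$ over labeled rooted trees on $[n]$, the standard exponential-generating-function product expansion
\[
\sum_{\substack{(T_1,T_2) \\ \text{disjoint labels}}} \frac{\omega(T_1)\,\omega(T_2)}{(|T_1|+|T_2|)!} \;=\; \sum_n \frac{1}{n!}\sum_{n_1+n_2=n}\binom{n}{n_1} y_{n_1}y_{n_2} \;=\; Y(\mathbf{z})^2
\]
yields \eqref{eq:disymmetry}, and \eqref{eq:magicbound} follows from $Y^u \le Y - Y^2/2 \le 1/2$. The one step that requires the most care is the partition reformulation of $\cU_0$-admissible decompositions underlying supermultiplicativity: after that, the argument is pure bookkeeping and mimics the classical dissymmetry theorem for trees, with an inequality replacing equality precisely because of the super-multiplicative nature of $\omega$.
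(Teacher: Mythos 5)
Your proposal is correct and follows essentially the same route as the paper: interpret $Y-Y^u$ as the edge-marked partition function via $v(U)-1=e(U)$, cut the marked edge, use supermultiplicativity of $\omega(\cdot,\mathbf{z})$, and recover $\tfrac12 Y^2$ from the exponential product formula, with \eqref{eq:magicbound} following from $y-y^2/2\le\tfrac12$. Your partition-of-$V(T)$ reformulation of admissible decompositions is a welcome explicit justification of the reordering needed to concatenate the two maximum-weight decompositions across the cut edge, a point the paper's proof asserts without detail.
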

\begin{proof}
Let $U\in \cU^{\ell}$ be a labeled unrooted tree. Then the number $e(U)=|U|-1$ of edges of $U$ and the number $v(U)=|U|$ of vertices of $U$ are related by the equation:
$$
v(U)-1 = e(U)\;.
$$
By multiplying this equality by $\omega(U,\mathbf{z})/|U|!$ and summing over all unrooted labeled trees $U$, it follows that the quantity $Y^e(\mathbf{z}):=Y(\mathbf{z})-Y^u(\mathbf{z})$ can be interpreted as the exponential partition function of all labeled trees \emph{with one marked edge}, counted with their maximum weight.
Now let $U$ be a labeled tree with a marked edge $e$. Removing $e$ splits $U$ into two connected components $T_1,T_2\ \in \cT^{\ell}$ that are naturally rooted at a vertex, and by definition of the maximum weight we have the supermultiplicativity property:
$$
\omega(U,\mathbf{z}) \geq \omega(T_1,\mathbf{z}) \cdot \omega(T_2,\mathbf{z}).
$$
Indeed, the right hand side is the weight of the $\cU_0$-admissible decomposition of $U$ induced by the decomposition with maximum weight of each of its components, and the weight of this decomposition is a lower bound on the maximum weight. Conversely, given any two rooted labeled trees $T_1$ and $T_2$ whose sizes add up to $|U|$, there are $\frac{|U|!}{|T_1|!|T_2|!}$ ways to distribute the labels in $[1..|U|]$ between them to build a labeled tree $U$ of this form, and each tree $U$ with a marked edge is obtained in exactly two ways by this construction.
Since all sums are absolutely convergent, we thus get that
$$Y^e(\mathbf{z})\geq \frac{1}{2} Y(\mathbf{z})^2,$$
which gives \eqref{eq:disymmetry}.

The bound \eqref{eq:magicbound} follows since by definition of $D$, $Y(\mathbf{z})$ is a well defined real number, and since for all $y\in \mathbb{R}$ one has $y-\frac{1}{2}y^2 \leq \frac{1}{2}$.
\end{proof}

\begin{rem}
The partition function $T(x)$ of all rooted trees, which is solution of the equation $T(x) = x \exp(T(x))$, has radius of convergence $e^{-1}$, and its value at the dominant singularity is $T(e^{-1})=1$. Moreover, it is classical  that the generating function of \emph{unrooted} trees is given by $T^u(x)=T(x)-\frac{1}{2}T(x)^2$ (see for example~\cite{quebecois}). It follows that, at the dominant singularity, one has $T^u(e^{-1})=\frac{1}{2}$. Note that this implies~\eqref{eq:disymmetry} in the case where $\cU_0$ is a singleton, and that this also shows that \eqref{eq:disymmetry} is tight. We also note that, using classical singularity analysis~\cite{flajolet}, this enables one to reprove the result of R\'enyi~\cite{Renyi} that says that a random forest of size $n$ is connected with probability $e^{-1/2} + o(1)$ when $n$ tends to infinity.
\end{rem}

The last partition functions we define are the functionals $Y^u_{\mathcal{U}_0}(\mathbf{z})$ and $\widetilde{Y}^u_{\mathcal{U}_0}(\mathbf{z})$, defined by:
\begin{align*}
      {Y}^u_{\mathcal{U}_0}(\mathbf{z}) := \sum_{U \in \cU_0} \frac{\omega(U,\mathbf{z})}{\mathrm{\Aut_u}(U)} \ \ ,\ \  
\widetilde{Y}^u_{\mathcal{U}_0}(\mathbf{z}) := \sum_{U \in \cU_0} \frac{z^U}{\mathrm{\Aut_u}(U)}.
\end{align*}
Note that the sums are taken over all the elements of $\cU_0$, that are considered as \emph{unrooted} trees. 

\subsection{Optimization}
\label{subsec:optimization}

In the last subsection we have shown (Lemma~\ref{lemma:disymmetry}) that the fact that $Y(\mathbf{z})<\infty$ implies that $Y^u(\mathbf{z})\leq \frac{1}{2}$. The goal of this subsection, achieved in the next proposition, is to transfer this result to truncated analogues of these partition functions.

For all $k\geq 1$, define the following truncated version of $Y(\mathbf{z})$:
$$
Y_{\leq k}(\mathbf{z}):= \sum_{{T\in \cT} \atop {|T|\leq k}} \frac{\omega(T,\mathbf{z})}{\mathrm{Aut}_r(T)}.
$$
Note that $Y_{\leq k}(\mathbf{z})$ is defined by a finite sum, hence it is a well defined real number for \emph{all} $\mathbf{z}\in \Lambda$. We also define $Y_{=k}(\mathbf{z})$ to be the contribution of trees of size exactly $k$ to $Y(\mathbf{z})$:
$$
Y_{=k}(\mathbf{z}):=
Y_{\leq k}(\mathbf{z})
-
Y_{\leq k-1}(\mathbf{z}).
$$

\begin{prop}\label{prop:obj}
% For $k\geq 1$, let $\mathbf{z}^{(k)} \in \Lambda$ maximizing the objective function $Obj(\mathbf{z}^{(k)})$ under the constraint
% $
% Y_{\leq k}(\mathbf{z}^{(k)}) \leq 1.5,
% $ \textit{i.e.}:
% $$
% \mathbf{z}^{(k)}
% :=
% \mathop{argmax}_{\mathbf{z}}
% \{
% Obj(\mathbf{z}), \ Y_{\leq k}(\mathbf{z}) \leq 1.5
% \}
% $$
% Then there exists an integer $k_{*}$ such that
% $$
% Obj(\mathbf{z}^{(k_*)}) \leq \frac{1}{2} \left(1+\epsilon\right).
% $$
There exists a $k_*$, depending only on $\epsilon$ and $\mathcal{U}_0$,  such that for every $\mathbf{z}\in \Lambda$ satisfying $Y_{\leq k_*}(\mathbf{z}) \leq 1.5$, we have
$$
\widetilde{Y}^u_{\mathcal{U}_0}(\mathbf{z}) \leq \frac{1}{2}(1+\epsilon).
$$
\end{prop}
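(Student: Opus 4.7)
The plan is to argue by contradiction via a compactness/continuity argument, using Lemma~\ref{lemma:disymmetry} as the essential nonlinear ingredient. First, I would note the elementary comparison
\[
\widetilde{Y}^u_{\mathcal{U}_0}(\mathbf{z}) \;\leq\; Y^u_{\mathcal{U}_0}(\mathbf{z}) \;\leq\; Y^u(\mathbf{z}),
\]
valid for every $\mathbf{z}\in\Lambda$, because for each $U\in\mathcal{U}_0$ the trivial admissible decomposition $(U)$ shows $\omega(U,\mathbf{z})\geq z^U$. Hence it suffices to find $k_*$ ensuring that the hypothesis $Y_{\leq k_*}(\mathbf{z}) \leq 1.5$ forces $\mathbf{z}\in D$ (and therefore $Y^u(\mathbf{z})\leq \tfrac{1}{2}$ by Lemma~\ref{lemma:disymmetry}), up to the $\epsilon$-slack.

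Assume for contradiction that no such $k_*$ exists: for every integer $k\geq u_{\max}$ there is $\mathbf{z}^{(k)}\in\Lambda$ satisfying $Y_{\leq k}(\mathbf{z}^{(k)})\leq 1.5$ and $\widetilde{Y}^u_{\mathcal{U}_0}(\mathbf{z}^{(k)}) > \tfrac{1}{2}(1+\epsilon)$. Observe that for each $U\in\mathcal{U}_0$, rooting $U$ arbitrarily gives some $T_U\in\cT$ of size $|U|\leq u_{\max}\leq k$, and the trivial decomposition gives $\omega(T_U,\mathbf{z}^{(k)})\geq (z^{(k)})^U$; consequently
\[
(z^{(k)})^U \;\leq\; \Aut_r(T_U)\cdot Y_{\leq k}(\mathbf{z}^{(k)}) \;\leq\; 1.5\cdot u_{\max}!\,.
\]
Thus the sequence $(\mathbf{z}^{(k)})$ lies in a compact box in $\Lambda=(\mathbb{R}_+)^{\mathcal{U}_0}$, so a subsequence $\mathbf{z}^{(k_j)}$ converges to some $\mathbf{z}^*\in\Lambda$.

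Next I would promote the truncated bound to the untruncated one in the limit. For each fixed $m$, the function $Y_{\leq m}(\cdot)$ is continuous on $\Lambda$: indeed, for each rooted tree $T$ with $|T|\leq m$ there are only finitely many $\mathcal{U}_0$-admissible decompositions of $T$, each contributing a polynomial $\omega(\mathbf{T},\mathbf{z})=\prod z^{U_i}$, and $\omega(T,\mathbf{z})$ is the maximum of these polynomials, hence continuous. For $j$ large enough that $k_j\geq m$, monotonicity in the truncation level gives $Y_{\leq m}(\mathbf{z}^{(k_j)})\leq Y_{\leq k_j}(\mathbf{z}^{(k_j)})\leq 1.5$; passing to the limit yields $Y_{\leq m}(\mathbf{z}^*)\leq 1.5$. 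Since this holds for every $m$ and the series has non-negative terms, $Y(\mathbf{z}^*) = \sup_m Y_{\leq m}(\mathbf{z}^*) \leq 1.5<\infty$, so $\mathbf{z}^*\in D$.

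Lemma~\ref{lemma:disymmetry} then gives $Y^u(\mathbf{z}^*)\leq \tfrac{1}{2}$, hence $\widetilde{Y}^u_{\mathcal{U}_0}(\mathbf{z}^*)\leq \tfrac{1}{2}$ by the opening comparison. On the other hand $\widetilde{Y}^u_{\mathcal{U}_0}$ is a linear (hence continuous) function of $\mathbf{z}$, so
\[
\widetilde{Y}^u_{\mathcal{U}_0}(\mathbf{z}^*) \;=\; \lim_{j\to\infty} \widetilde{Y}^u_{\mathcal{U}_0}(\mathbf{z}^{(k_j)}) \;\geq\; \tfrac{1}{2}(1+\epsilon),
\]
contradicting the upper bound $\tfrac{1}{2}$. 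The main subtlety lies in verifying the continuity of $Y_{\leq m}$ and correctly exchanging the $j\to\infty$ limit with the tail truncation—both resolved by the fact that $\omega(T,\mathbf{z})$ is a maximum over a \emph{finite} set of monomials once $|T|$ is bounded. Note that, as in all compactness proofs, the argument is not quantitative: it produces $k_*$ but no explicit value depending on $\epsilon$ and $\mathcal{U}_0$.
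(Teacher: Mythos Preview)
Your argument is correct and follows the same overall compactness-plus-dissymmetry strategy as the paper, but your route to showing that the limit point lies in $D$ is genuinely simpler. The paper works with the \emph{argmax} sequence $\mathbf{z}_k$, extracts a limit $\mathbf{z}_\infty$, and then proves a separate Lemma~\ref{lemma:zInfty} showing only that $\mathbf{z}_\infty\in\bar{D}$; its proof goes by contradiction via the scaled product $\frac{1}{1+\delta}\circ\mathbf{z}_\infty$, a growth estimate on $Y_{=\ell_j}(\mathbf{z}_\infty)$, and a pointwise comparison $\omega(T,\mathbf{z}_{k_i})\geq(1-\delta/4)^{|T|}\omega(T,\mathbf{z}_\infty)$. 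Afterwards it still needs to perturb back into $D$ before applying Lemma~\ref{lemma:disymmetry}. You bypass all of this by the single observation that for each fixed $m$ the map $\mathbf{z}\mapsto Y_{\leq m}(\mathbf{z})$ is continuous (a finite sum of maxima of finitely many monomials), which immediately gives $Y_{\leq m}(\mathbf{z}^*)\leq 1.5$ for every $m$ and hence $Y(\mathbf{z}^*)\leq 1.5$, placing $\mathbf{z}^*$ directly in $D$. What the paper's approach buys is perhaps a more explicit feel for how the weights behave under perturbation; what your approach buys is brevity and a stronger intermediate conclusion ($\mathbf{z}^*\in D$ rather than $\bar{D}$), at no cost.
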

\begin{rem}
If $k\geq k_*$ then $Y_{\leq k_*}(\mathbf{z}) \leq Y_{\leq k}(\mathbf{z})$. Therefore if necessary the integer $k_*$ can be replaced by any larger value without changing the conclusion of the proposition.
\end{rem}
\begin{rem}
The constant $1.5$ in the above proposition could be replaced by any constant larger than $1$ (as the proof will show). To keep the notation light we preferred to fix some arbitrary value that is good enough for our proof.
\end{rem}

Note that for any $\mathbf{z}\in\Lambda$, if we define $\mathbf{z}_*\in \Lambda$ by the fact  that for all $U \in \mathcal{U}_0$ we have
$
\mathbf{z}_*^U = \omega(U,\mathbf{z}),
$
then $\omega(T,\mathbf{z})= \omega(T,\mathbf{z}_*)$ for any tree $T\in \mathcal{T}$ (this is easily seen by considering maximum weight decompositions). Since it is always true that $z^U \leq \omega(U,\mathbf{z})$, if follows that replacing $\mathbf{z}$ by $\mathbf{z}_*$ does not change the value of $Y_{\leq k}(\mathbf{z})$, while only making $\widetilde{Y}^u_{\mathcal{U}_0}(\mathbf{z})$ larger or equal. Therefore
\begin{align}
\max\{\widetilde{Y}^u_{\mathcal{U}_0}(\mathbf{z}): \ Y_{\leq k}(\mathbf{z}) \leq 1.5\}
&=
\max\{\widetilde{Y}^u_{\mathcal{U}_0}(\mathbf{z}): \ Y_{\leq k}(\mathbf{z}) \leq 1.5 \text{ and } \forall U\in \mathcal{U}_0,\ z^U=\omega(U,\mathbf{z})\ \}
\nonumber\\
&=
\max\{Y^u_{\mathcal{U}_0}(\mathbf{z}): \ Y_{\leq k}(\mathbf{z}) \leq 1.5 \text{ and } \forall U\in \mathcal{U}_0,\ z^U=\omega(U,\mathbf{z})\ \}.
\label{eq:equalmax}
\end{align}

Now, let us fix a sequence $(\mathbf{z}_k)_{k\geq u_{max}}$, such that for all $k\geq u_{max}$ we have:
$$
\mathbf{z}_k\in \{ \mathbf{z}: \ Y_{\leq k}(\mathbf{z}) \leq 1.5 \text{ and } \forall U\in \mathcal{U}_0,\  z^U=\omega(U,\mathbf{z})\}.
$$
Note that the set from where $\mathbf{z}_k$ is selected, is bounded: for $k\geq u_{\max}$, we have $Y_{\leq k}(\mathbf{z})\geq \omega(T,\mathbf{z})/\Aut_r(T)$ for each rooted tree $T$ that, as an unrooted tree, belongs to $\cU_0$. This directly implies that the sequence $\mathbf{z}_{k}$ is uniformly bounded.
%Note that $\mathbf{z}_k$ exists since for $k\geq u_{\max}$, $Y_{\leq k}(\mathbf{z})\geq \omega(T,\mathbf{z})/\Aut_r(T)$ for each rooted tree $T$ that, as an unrooted tree, belongs to $\cU_0$,
%which shows that the maximum is taken over a bounded set. This also shows that the sequence $\mathbf{z}_{k}$ is uniformly bounded. 
We can thus extract an increasing  sequence $k_i$ such that the corresponding subsequence converges, and we note:
$$
\mathbf{z}_\infty := \lim_{i \rightarrow \infty} \mathbf{z}_{k_i}.
$$
Our first step in the proof of the proposition is the following lemma.
\begin{lemma}\label{lemma:zInfty}
The point $\mathbf{z}_\infty$ belongs to $\bar{D}$ (the closure of $D$).
\end{lemma}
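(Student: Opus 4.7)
The plan is to show the stronger statement $Y(\mathbf{z}_\infty) \leq 1.5$, which then implies $\mathbf{z}_\infty \in D \subseteq \bar D$. The approach is to pass the constraint $Y_{\leq k}(\mathbf{z}_{k_i}) \leq 1.5$ to the limit using continuity of each truncated partition function, and then let $k \to \infty$ with monotone convergence.

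First I would observe that for every fixed $k$, the function $\mathbf{z} \mapsto Y_{\leq k}(\mathbf{z})$ is continuous on $\Lambda$. Indeed, $Y_{\leq k}(\mathbf{z})$ is a finite sum over trees $T \in \cT$ of size at most $k$ of the quantities $\omega(T,\mathbf{z})/\Aut_r(T)$, and each $\omega(T,\mathbf{z})$ is by definition the maximum of the finitely many monomials $\prod_{i=1}^{\ell} z^{U_i}$ indexed by $\cU_0$-admissible decompositions of $T$ (of which there are finitely many since $|T| \leq k$). A finite maximum of continuous functions is continuous.

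Next I would fix an arbitrary $k \geq u_{\max}$ and consider indices $i$ large enough that $k_i \geq k$. By monotonicity in the truncation parameter one has $Y_{\leq k}(\mathbf{z}_{k_i}) \leq Y_{\leq k_i}(\mathbf{z}_{k_i}) \leq 1.5$, where the last inequality is the constraint used in the definition of $\mathbf{z}_{k_i}$. Since $\mathbf{z}_{k_i} \to \mathbf{z}_\infty$ and $Y_{\leq k}$ is continuous, letting $i \to \infty$ gives
\[
Y_{\leq k}(\mathbf{z}_\infty) \;=\; \lim_{i \to \infty} Y_{\leq k}(\mathbf{z}_{k_i}) \;\leq\; 1.5.
\]
Since $k$ was arbitrary and the sequence $\bigl(Y_{\leq k}(\mathbf{z}_\infty)\bigr)_{k\geq 1}$ is non-decreasing with limit $Y(\mathbf{z}_\infty)$ (as a supremum of partial sums of a series with non-negative terms), the monotone convergence argument gives $Y(\mathbf{z}_\infty) \leq 1.5 < \infty$, so $\mathbf{z}_\infty \in D \subseteq \bar{D}$, as claimed.

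The only subtle point — and what I would expect to be the main (though minor) obstacle — is the continuity of $Y_{\leq k}$; once one unpacks that each $\omega(T,\mathbf{z})$ is a maximum of finitely many monomials (this uses crucially that we consider only $\cU_0$-admissible decompositions, with $\cU_0$ finite, so that for bounded $|T|$ the set of decompositions is finite), the rest of the argument is a standard continuity-plus-monotone-convergence exchange of limits.
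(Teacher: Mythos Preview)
Your proof is correct, and in fact proves the stronger statement $\mathbf{z}_\infty \in D$ (not merely $\bar D$), with the explicit bound $Y(\mathbf{z}_\infty)\leq 1.5$. The key observation---that each $Y_{\leq k}$ is continuous because $\omega(T,\mathbf{z})$ is a finite maximum of monomials---is exactly right, and the monotonicity-plus-continuity argument goes through cleanly.

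The paper takes a quite different route. It argues by contradiction: assuming $\mathbf{z}_\infty\notin\bar D$, it finds $\delta>0$ with $\frac{1}{1+\delta}\circ\mathbf{z}_\infty\notin D$ (where $\circ$ is a scaled multiplication, $(\lambda\circ\mathbf{z})^U=\lambda^{|U|}z^U$), deduces that $Y_{=\ell_j}(\mathbf{z}_\infty)$ grows like $(1+\delta/2)^{\ell_j}$ along some subsequence, and then proves a technical comparison $\omega(T,\mathbf{z}_{k_i})\geq (1-\delta/4)^{|T|}\omega(T,\mathbf{z}_\infty)$ (handling separately the coordinates of $\mathbf{z}_\infty$ that vanish) to transfer this growth to $Y_{\leq k_i}(\mathbf{z}_{k_i})$ and obtain a contradiction with the bound $1.5$. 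Your argument bypasses all of this by passing each truncated constraint directly to the limit. In particular, because you land inside $D$ rather than $\bar D$, the subsequent proof of Proposition~\ref{prop:obj} can be streamlined: one may apply Lemma~\ref{lemma:disymmetry} at $\mathbf{z}_\infty$ itself rather than at $\frac{1}{1+\delta}\circ\mathbf{z}_\infty$, eliminating the need for the $\delta\to 0$ step there as well.
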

\begin{proof}
We go by contradiction. Suppose $\mathbf{z}_\infty$ does not belong to $\bar{D}$. Then there exists $\delta>0$ such that $\frac{1}{1+\delta} \circ \mathbf{z}_\infty \not\in D$, where we use the notation $\circ$ for the \emph{scaled multiplication} of a vector $\mathbf{z}\in\Lambda$ by a scalar $\lambda\in \mathbb{R}$:
$$
(\lambda \circ \mathbf{z})^U = \lambda^{|U|}z^U, \ \ U \in \cU_0.
$$
We thus have $Y(\frac{1}{1+\delta} \circ \mathbf{z}_\infty)=\infty$, \textit{i.e.}
$\sum_{\ell\geq 1} Y_{=\ell}(\mathbf{z}_{\infty}) (1+\delta)^{-\ell} = \infty$.
This shows that there exists an infinite sequence $(\ell_j)_{j\geq 1}$ tending to infinity such that for all $j\geq 1$ one has:
$$
Y_{=\ell_j}(\mathbf{z}_{\infty}) \geq \left(1+\delta/2\right)^{\ell_j}.
$$
Now we claim that there exists some $i_0$ such that for $i\geq i_0$ one has, for every rooted tree $T\in \cT$:
\begin{eqnarray}\label{eq:claim}
\omega(T,\mathbf{z}_{k_i}) \geq \omega(T,\mathbf{z}_\infty)\left(1-\frac{\delta}{4}\right)^{|T|}.
\end{eqnarray}
If we admit this claim, we can conclude the proof as follows. We have, for $i\geq i_0$ and $j\geq 1$:
\begin{eqnarray*}
Y_{=\ell_j}(\mathbf{z}_{k_i}) &\geq&
Y_{=\ell_j}(\mathbf{z}_{\infty}) \left(1-\delta/4\right)^{\ell_j}\\
&\geq&
\left((1-\delta/4)(1+\delta/2)\right)^{\ell_j}.
\end{eqnarray*}
But $(1-\delta/4)(1+\delta/2)$ is larger than $1$ provided we took $\delta$ small enough (and we can do that), so there exists some $j$ such that $\big((1-\delta/4)(1+\delta/2)\big)^{\ell_j} >1.6$, which in turns implies that for $i\geq i_0$, one has $Y_{=\ell_j}(\mathbf{z}_{k_i})>1.6$. Now we can choose $i$ large enough so that $k_i \geq \ell_j$, and we get that:
$$
Y_{\leq k_i}(\mathbf{z}_{k_i}) \geq Y_{=\ell_j}(\mathbf{z}_{k_i})>1.6,
$$
which is a contradiction.

So it just remains to prove the claim in~\eqref{eq:claim}. We let $I\subset \cU_0$ denote the set indexing coordinates of $\mathbf{z}_\infty$ that are equal to zero, \textit{i.e.}:
$$
I:=\{U\in \cU_0, \ (z_\infty)^U=0\}.
$$
Since $\mathbf{z}_{k_i}$ converges to $\mathbf{z}_\infty$, and since $(\mathbf{z}_\infty)^U\neq 0$ for $U\in\cU_0\setminus I$, each of the ratios $\frac{(z_{k_i})^U}{(z_{\infty})^U}$ converge to $1$ when $i$ tends to infinity, for $U \in \cU_0\setminus I$. Therefore there exists $i_0$ such that for $i\geq i_0$, we have for all $U\in\cU_0\setminus I$:
\begin{eqnarray*}
\left|\left(\frac{(z_{k_i})^U}{(z_{\infty})^U}\right)^{1/|U|}-1 \right|\;
<\delta/4.
\end{eqnarray*}
We can now prove the claim~\eqref{eq:claim}.
%
%It is then clear from the combinatorial definition of the maximum weight that the following bound holds, for any tree $T$ and $i\geq i_0$:
%$$
%\omega(T,\mathbf{z}_{(k)}) \geq	\omega(T,\mathbf{z}_\infty)\cdot
%\left(1-\delta/4\right)^{|T|}
%$$
First, if $\omega(T,\mathbf{z}_\infty)=0$ then the claim is obviously true. Otherwise, consider an $\cU_0$-admissible decomposition $\mathbf{T}$ of $T$ that gives rise to the maximum weight $\omega(T,\mathbf{z}_\infty)$. Since $\omega(T,\mathbf{z}_\infty)\neq 0$, the decomposition only uses unrooted trees in $\cU_0\setminus I$. 
%Using the same decomposition with the parameter $\mathbf{z}_{k_i}$ instead of $\mathbf{z}_\infty$ leads to a weight that differs from $\omega(T,\mathbf{z}_\infty)$ by a factor
We thus have:
\begin{align}\label{eq:compareDecompositions}
\frac{\omega(\mathbf{T},\mathbf{z}_{k_i})}{
\omega(\mathbf{T},\mathbf{z}_{\infty})  }=
\prod_{U\in \cU_0\setminus I} \left(\frac{(z_{k_i})^U}{(z_{\infty})^U} \right)^{\nu(U)},
\end{align}
where $\nu(U)$ is the number of times $U$ is used in the decomposition $\mathbf{T}$. Since $\sum_{U\in \cU_0} |U| \nu(U)=|T|$, the ratio~\eqref{eq:compareDecompositions} is larger than $(1-\delta/4)^{|T|}$, and the claim follows since $\omega(T,\mathbf{z}_{k_i}) \geq \omega(\mathbf{T},\mathbf{z}_{k_i})$.
\end{proof}

We can now prove the proposition:
\begin{proof}[Proof of Proposition~\ref{prop:obj}]
For every $k\geq u_{\max}$, we now choose
$$
\mathbf{z}_k\in \mathop{argmax}_{\mathbf{z}\in \Lambda} \{Y^u_{\mathcal{U}_0}(\mathbf{z}): \ Y_{\leq k}(\mathbf{z}) \leq 1.5 \text{ and } \forall U\in \mathcal{U}_0,\  z^U=\omega(U,\mathbf{z})\},
$$
and we select $\mathbf{z}_{\infty}$ as before.

From Lemma~\ref{lemma:zInfty} and the fact that $D$ is closed downwards for the product order, for all $\delta>0$ we have $\frac{1}{1+\delta}\circ \mathbf{z}_{\infty} \in D$, where we recycle the notation $\circ$ for the scaled product from the previous proof. From Lemma~\ref{lemma:disymmetry}, it follows that $Y^u( \frac{1}{1+\delta}\circ \mathbf{z}_{\infty}) \leq \frac{1}{2}$. Now, recall that we restricted to $\mathbf{z}\in \Lambda$ such that $z^U=\omega(\mathbf{z},U)$ for every $U\in\cU_0$. This implies that
$$
Y^u\left( \frac{1}{1+\delta} \circ \mathbf{z}_\infty\right) \geq
Y^u_{\mathcal{U}_0} \left( \frac{1}{1+\delta} \circ \mathbf{z}_\infty\right).
$$
Moreover, $Y^u_{\mathcal{U}_0}\left( \frac{1}{1+\delta} \circ \mathbf{z}_\infty\right) \geq (1+\delta)^{-u_{\max}} Y^u_{\mathcal{U}_0}(\mathbf{z}_\infty)$, so
$Y^u_{\mathcal{U}_0}(\mathbf{z}_\infty)\leq (1+\delta)^{u_{\max}} \cdot \frac{1}{2}$. Since this is true for any $\delta>0$, we obtain that
$Y^u_{\mathcal{U}_0}(\mathbf{z}_\infty)\leq\frac{1}{2}.
$
%The proposition follows 
Since $\mathbf{z}_{k_i}$ converges to $\mathbf{z}_\infty$ and $Y^u_{\mathcal{U}_0}(\mathbf{z})$ is clearly continuous, we can choose $i$ large enough and $k_*=k_i$ so that $Y^u_{\mathcal{U}_0}(\mathbf{z}_{k_*})\leq\frac{1}{2}(1+\epsilon)$.
By the choice of $\mathbf{z}_{k}$ and by~\eqref{eq:equalmax}, it follows that 
$\max\{\widetilde{Y}^u_{\mathcal{U}_0}(\mathbf{z}): \ Y_{\leq k_*}(\mathbf{z}) \leq 1.5\}\leq \frac{1}{2}(1+\epsilon)$, which implies the proposition.
\end{proof}

\section{Finishing the proof}
\label{sec:finishproof}

In this section we conclude the proof of Proposition~\ref{prop:AvsB} (hence of the main theorem). The idea of the proof is to combine the main results of Section~\ref{sec:local} (Corollary~\ref{cor:sumBound}) and of Section~\ref{sec:partitionfunctions} (Proposition~\ref{prop:obj}) and to apply them to a well chosen set of boxes.

\subsection{Boxing lemma}
\label{subsec:boxing}

The results of the previous section give us bounds on the variables $\mathbf{z}_{n,\alpha}$ defined by~\eqref{eq:defzn}, which gives us some control on the ratio of the sizes of the sets $\mathcal{A}_{n,[\alpha]^w_{q_*}}$ and $\mathcal{B}^U_{n,[\alpha]^w}$, where $[\alpha]^w$ is some box inside the parameter space $\mathcal{E}$. In order to use this information in the next subsection, we first show that there exists a partition of the parameter space $\cE$ into disjoint boxes $[\beta_i]^w$ such that they are $2q_*$-apart and they capture most of the graphs in $\mathcal{B}_n^U$ for each $U\in\mathcal{U}_0$.
\begin{lemma}[Box partitioning lemma] \label{lemma:grid}
For all $\epsilon>0$, $\cU_0$ and $\cT_0$, there exist $q_*$, $w$ and $n_0$ such that for all $n\geq n_0$ the following is true:
There exist $K$ and a family of boxes $[\beta_i]^w \subset \cE$ of size $K$ such that
% \added{$\{[\beta_i]^w_{q_*}:\,1\leq i\leq K\}$ partitions $\cE$ and}
\begin{enumerate}[label=(\subscript{P}{\arabic*})]
\item
The $q_*$-neighbourhoods of boxes form a partition  of $\cE$; i.e.
$$\displaystyle \cE = \biguplus_{i=1}^K [\beta_i]^w_{q_*},$$
% and $[\beta_i]^w_{q_*}  \cap [\beta_j]^w_{q_*} = \emptyset$ for all $1\leq i \neq j \leq K$.}{Boxes are $(2q_*)$-apart from each other, i.e. $[\beta_i]^w_{q_*}  \cap [\beta_j]^w_{q_*} = \emptyset$ for all $1\leq i \neq j \leq K$}
%\item
%Boxes are $(2q_*)$-apart from each other, i.e. $[\beta_i]^w_{q_*}  \cap [\beta_j]^w_{q_*} = \emptyset$ for all $1\leq i \neq j \leq K$.
\item
Boxes capture a large fraction of each set $\mathcal{B}^U_n$; i.e. for each $U\in \cU_0$, we have:
$$
\sum_{i=1}^K \sum_{\beta \in [\beta_i]^w} |\mathcal{B}^U_{n,\beta}| \geq  (1-\epsilon) |\mathcal{B}_n^{U}|.
$$
\end{enumerate}
\end{lemma}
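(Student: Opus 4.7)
\medskip
\noindent\textbf{Plan of proof.} The approach is a standard probabilistic argument: define a random family of boxes indexed by a uniform offset $\vec{s}$, show that on average it captures a $(1-\epsilon/(2|\cU_0|))$-fraction of each $|\mathcal{B}^U_n|$, then apply Markov's inequality and a union bound over the finite set $\cU_0$ to deduce the existence of a single deterministic offset that works for all $U$ simultaneously.

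\medskip
Concretely, set $L:=w+2q_*$. For each offset $\vec{s}\in \{0,1,\dots,L-1\}^{\cT_0}$, consider the family
\[
\mathcal{F}_{\vec{s}}
 := \Big\{\, [\beta_{\vec{k}}]^w : \vec{k} \in \mathbb{Z}^{\cT_0},\ \beta_{\vec{k}}(T) := s(T)+k(T) L \text{ for all } T,\ [\beta_{\vec{k}}]^w \cap \cE \neq \emptyset\,\Big\}\;,
\]
where the definition of $[\beta]^w$ is extended in the natural way to integer vectors $\beta$ with possibly negative coordinates (it is still a well-defined subset of $\cE$). By construction, any two distinct elements of $\mathcal{F}_{\vec{s}}$ differ by at least $L=w+2q_*$ in some coordinate, so their $q_*$-neighborhoods are disjoint; hence $\mathcal{F}_{\vec{s}}$ satisfies $(P_1)$ for every $\vec{s}$.

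\medskip
Next I would compute the expected captured mass. Drawing $\vec{s}$ uniformly at random from $\{0,\dots,L-1\}^{\cT_0}$, the event ``$\alpha\in\bigcup \mathcal{F}_{\vec{s}}$'' translates coordinate-wise to ``$\alpha(T)-s(T) \bmod L \in \{0,\dots,w-1\}$''. By independence of the coordinates of $\vec{s}$, this holds with probability exactly $(w/L)^{|\cT_0|}$ for every $\alpha\in\cE$. Therefore, by linearity of expectation, for every $U\in\cU_0$,
\[
\mathbb{E}_{\vec{s}}\!\left[\,\sum_{[\beta]^w \in \mathcal{F}_{\vec{s}}} |\mathcal{B}^U_{n,[\beta]^w}|\,\right]
 = \Big(\tfrac{w}{w+2q_*}\Big)^{|\cT_0|} |\mathcal{B}^U_n|\;.
\]
Now choose $w=w(\epsilon,\cU_0,\cT_0)$ large enough so that
\[
1 - \Big(\tfrac{w}{w+2q_*}\Big)^{|\cT_0|}\ \leq\ \tfrac{\epsilon}{2|\cU_0|}\;,
\]
which is possible since $|\cT_0|$, $|\cU_0|$ and $q_*$ are constants that depend only on $\cT_0$ and $\cU_0$. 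Then the expected \emph{uncaptured} mass from each $\mathcal{B}^U_n$ is at most $\tfrac{\epsilon}{2|\cU_0|}|\mathcal{B}^U_n|$.

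\medskip
Finally, for each $U\in\cU_0$ with $|\mathcal{B}^U_n|>0$, let $E_U$ denote the event (over the random offset $\vec{s}$) that $\mathcal{F}_{\vec{s}}$ captures strictly less than $(1-\epsilon)|\mathcal{B}^U_n|$ of $\mathcal{B}^U_n$. By Markov's inequality applied to the uncaptured-mass random variable, $\Pr[E_U] \leq \tfrac{1}{2|\cU_0|}$, and a union bound over the finite family $\cU_0$ yields $\Pr[\bigcup_{U\in\cU_0} E_U] \leq \tfrac{1}{2} < 1$. Hence a deterministic offset $\vec{s}^*$ exists for which $\mathcal{F}_{\vec{s}^*}$ satisfies $(P_2)$ for every $U\in\cU_0$ simultaneously, and $(P_1)$ was already automatic; setting $[\beta_i]^w$ to be the enumeration of $\mathcal{F}_{\vec{s}^*}$ completes the proof.

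\medskip
\noindent\emph{Main obstacle.} The main subtlety is the handling of boundary effects: if one insists on boxes with non-negative lower corner lying inside $\cE$, the probability of capturing $\alpha$ drops below $(w/L)^{|\cT_0|}$ when some $\alpha(T)<w-1$ or $\alpha(T)>n-w$. Allowing virtual lower corners with negative entries (while keeping $[\beta]^w\cap\cE \neq \emptyset$) circumvents this issue cleanly; if one prefers to stick to the original definition, the simplex constraint $\sum_T\alpha(T)\leq n-1$ from~\eqref{eq:constraintSumAlphaDiscrete} forces at most one coordinate to exceed $n-w$ whenever $n\geq 2w$, so the boundary correction can be absorbed into the $\epsilon/(2|\cU_0|)$ slack by taking $n_0$ sufficiently large.
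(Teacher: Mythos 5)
Your proof is correct and follows essentially the same strategy as the paper: an averaging argument over random translates of a periodic grid of width-$w$ boxes separated by gaps of width $2q_*$, showing the expected uncaptured mass is a small fraction of each $|\mathcal{B}^U_n|$ and concluding that some deterministic translate works. The only (immaterial) differences are that the paper handles the simultaneity over $U\in\cU_0$ by averaging a single combined measure $\mu=\sum_U b^U$ rather than via Markov plus a union bound, and treats the boundary by translating cyclically in $\mathbb{Z}^{\cT_0}/\cE$ rather than by allowing virtual lower corners.
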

\begin{proof}
Let $d=|\cT_0|$ and let $q_*$ be chosen as in Lemma~\ref{lemma:localSwitching}. Choose $w\geq 0$ and $n_0$ such that for every $n\geq n_0$
$$
 1- \left(1-\frac{w+2q_*}{n} \right)^d \left(1+\frac{2q_*}{w} \right)^{-d} \leq \epsilon |\cU_0|^{-1}\;.
$$
Consider the following set
$$
\Gamma_0=\{\beta\in \cE:\; \exists T\in\cT_0,\exists j\geq 1, j(w+2q_*)-2q_*\leq \beta(T)< j(w+2q_*)\}\;.
$$
The set $\Gamma_0$ can be seen as a subset of hyperplanes of width $2q_*$ equally spaced at distance $w$ in each direction. Observe that $\Gamma_0^c=\cE \setminus \Gamma_0$ contains a set of $K= \left(\frac{n}{w+2q_*}-1\right)^d$ boxes of width $w$ that are $(2q_*)$-apart. However, these boxes might not contain a negligible part of the graphs in $\mathcal{B}_n$.

The size of the $\Gamma_0^c$ satisfies,
\begin{align*}
|\Gamma_0^c| &\geq w^d  \left(\frac{n}{w+2q_*}-1\right)^d \geq \left(1-\frac{w+2q_*}{n} \right)^d \left(1+\frac{2q_*}{w} \right)^{-d} n^d\;.
\end{align*}
For the remaining of the proof, let us consider $\Gamma_0 \subseteq \mathbb{Z}^d/ \cE$. Choose $\beta$ uniformly at random from $\mathbb{Z}^d/ \cE$. We write $\Gamma_0+\beta=\{\gamma+\beta :\;\gamma\in \Gamma_0\}$ to denote the translation of the set $\Gamma_0$ by $\beta$. Recall that $|\cE|=n^d$.  Then
\begin{align}\label{eq:size}
|\Gamma_0+\beta|=|\Gamma_0|\leq \left( 1- \left(1-\frac{w+2q_*}{n} \right)^d \left(1+\frac{2q_*}{w} \right)^{-d} \right)n^d \leq \epsilon |\cU_0|^{-1} n^d\;.
\end{align}
We now define the following measure $\mu$ over the set $\cE$. For each $U\in \cU_0$ and $\Gamma\subseteq \cE
$, let $b^U_\Gamma=\frac{|\mathcal{B}^U_{n,\Gamma}|}{|\cU_0||\mathcal{B}^U_n|}$. For every $\Gamma \subseteq \cE
$, we define
$$
\mu(\Gamma)= \sum_{U\in \cU_0} b^U_{\Gamma}\;.
$$
Observe that if $\mu(\Gamma)\leq \epsilon|\cU_0|^{-1}$, then, for every $U\in \cU_0$
$$
|\mathcal{B}^U_{n,\Gamma}|=b^U_\Gamma|\cU_0||\mathcal{B}^U_n|\leq \mu(\Gamma)|\cU_0||\mathcal{B}^U_n| \leq  \epsilon |\mathcal{B}^U_n|\;.
$$
If $\beta\in \cE$ is chosen uniformly at random and using~\eqref{eq:size}, we have
$$
\mathbb{E_\beta} [\mu(\Gamma_0+\beta)] = \sum_{\gamma\in \cE
}\mu(\gamma)\Pr(\gamma\in \Gamma_0+\beta) = \sum_{\gamma\in \cE
} \sum_{U\in \cU_0} b^U_{\gamma} \cdot\frac{|\Gamma_0|}{n^d}\leq \epsilon  |\cU_0|^{-1}\;,
$$
where we used that $\sum_{\gamma\in \cE}\sum_{U\in\cU_0} b^U_\gamma = 1$.

Thus, there exists a $\beta_0\in \cE$ such that  $\mu(\Gamma_0+\beta_0)\leq \epsilon  |\cU_0|^{-1}$. Then, the set $\Gamma_1=\mathcal{E}\setminus(\Gamma_0+\beta_0)$ is a set of $(2q_*)$-apart boxes $[\beta_i]^w$ for $i\in [1..K]$ that satisfies
$$
\sum_{i=1}^K \sum_{\beta \in [\beta_i]^w} |\mathcal{B}^U_{n,\beta}|= |\mathcal{B}^U_{n,\Gamma_1}| = |\mathcal{B}^U_{n}| - |\mathcal{B}^U_{n,(\Gamma_0+\beta_0)}|\geq  (1-\epsilon) |\mathcal{B}^U_{n}|\;,
$$
for every $U\in \cU_0$.
\end{proof}

\subsection{Proof of Proposition~\ref{prop:AvsB}}
\label{subsec:endofproof}

Let $\epsilon'>0$ be fixed, and let $\epsilon:=\epsilon'/3$. Let us choose $\cU_0$ to be the family of all unrooted unlabeled trees of size at most $u_{\max}$, where $u_{max}$ is chosen such that  $(u_{\max})^{-1}<\epsilon$. Now that $\cU_0$ is fixed, we can apply Proposition~\ref{prop:obj}, with our current value of $\epsilon$, and we let $k_*$ be the value given by this proposition. The integer $k_*$ depends on $\epsilon$ (and also on $\cU_0$, that itself depends on $\epsilon$). We now let $\cT_0$ be the set of all rooted trees of size at most $k_*$.

Now that $\epsilon$, $\cU_0$, and $\cT_0$ are fixed, so is the constant $q_*$ given by Lemma~\ref{lemma:localSwitching}.
We can then apply Lemma~\ref{lemma:grid} to get some constants $w$ and $n_0$, such that for every $n\geq n_0$ there exists a family of $K$ boxes $[\beta_i]$ satisfying $(P_1)$ and $(P_2)$. All these constants depend on $\epsilon$ (and also on $\cU_0$ and $\cT_0$, that both also depend on $\epsilon$).

We can now choose $n_1\geq n_0$ large enough, so that
\begin{eqnarray}\label{eq:chooseN}
 (w+q_*)(2k_*)^{k_*-1} |\cT_0| < 0.4 n_1.
\end{eqnarray}
Note that the left-hand side is the quantity $C$ that appears in \eqref{eq:sumBound}, with $t_{\max}=k_*$.
For a further use, we will also assume that $n_1\geq u_{\max}/\epsilon$.

For $n\geq n_1$, let $[\alpha]^w$~(where $\alpha=\beta_i$, for some $1\leq i\leq K$) be one of the boxes given by the box partitioning lemma (Lemma~\ref{lemma:grid}). Recall the definition of $\mathbf{z}_{n,\alpha}=(z^U_{n,\alpha})_{U\in \cU_0}$ given in \eqref{eq:defzn}:
$$
z_{n,\alpha}^U:=\Aut_u(U) \frac{|\mathcal{B}^U_{n,[\alpha]^w}|}{|\mathcal{A}_{n,[\alpha]^w_{q_*}}|} \left(1-\frac{|U|}{n}\right).
$$
Then we have by Corollary~\ref{cor:sumBound} and the bound~\eqref{eq:chooseN} that:
$$
Y_{\leq k_*}(\mathbf{z}_{n,\alpha}) =\sum_{T\in\cT_0} \frac{\omega(T, \mathbf{z}_{n,\alpha})}{\Aut_r(T)}\leq 1.4\;.
$$
From Proposition~\ref{prop:obj}, this implies that:
$$
\sum_{U\in\cU_0} \frac{|\mathcal{B}^U_{n,[\alpha]^w}|}{|\mathcal{A}_{n,[\alpha]^w_{q_*}}|} \left(1-\frac{|U|}{n}\right)
= \sum_{U\in\cU_0} \frac{z^U_{n,\alpha}}{\Aut_u(U)}
=\widetilde{Y}^u_{\mathcal{U}_0}(\mathbf{z}_{n,\alpha})\leq \frac{1}{2} \left(1+\epsilon \right).
$$
% Equivalently, returning to the definitions of $\mathbf{z}_{n,\alpha}$ and $\Obj(\mathbf{z})$, we have:
% $$
% \leq \frac{1}{2}. %\left(1+\epsilon \right).
% $$
Since $|U|/n\leq u_{\max}/n <\epsilon$ for all $U\in \cU_0$, we deduce that we have:
\begin{align}\label{eq:next_paper1}
\sum_{U\in\cU_0}
\left|\mathcal{B}^U_{n,[\alpha]^w}\right| \leq \frac{1}{2}  \left|\mathcal{A}_{n,[\alpha]^w_{q_*}}\right| \left(1+\epsilon \right)(1-\epsilon)^{-1}.
\end{align}
We now sum the last inequality over all the boxes $[\beta_i]^w$ provided by  Lemma~\ref{lemma:grid}. We obtain:
\begin{align*}
\sum_{U\in\cU_0} (1-\epsilon)|\mathcal{B}^U_{n}|&\leq  \sum_{U\in\cU_0} \sum_{i=1}^{K} |\mathcal{B}^U_{n,[\beta_i]^w}|\leq \frac{1}{2} \left(1+\epsilon \right)(1-\epsilon)^{-1}  \sum_{i=1}^{K} |\mathcal{A}_{n,[\beta_i]^w_{q_*}}| \leq  \frac{1}{2}  |\mathcal{A}_{n}|  \left(1+\epsilon \right) (1-\epsilon)^{-1}.
\end{align*}
Here: the central inequality is the summation of the previous bound;
the leftmost inequality comes from Property $(P_2)$ of Lemma~\ref{lemma:grid} (boxes capture most of the mass of the sets $\mathcal{B}_n^U$); the rightmost inequality comes from Property $(P_1)$ of Lemma~\ref{lemma:grid} (boxes are $(2q_*)$-apart, so the sets $[\beta_i]^w_{q_*}$ are disjoint).
We have just proved that for $n\geq n_1$,
$$
\sum_{U\in\cU_0}
|\mathcal{B}^U_{n}| \leq \frac{1}{2} \left(1+\epsilon \right)(1-\epsilon)^{-2} |\mathcal{A}_n|.
$$
Again, using the same simple double counting argument as we used in the proof of Lemma~\ref{lemma:simpleCounting}, we have:
$$
u_{\max}(n-u_{\max}) \sum_{U\in\cU\setminus\cU_0}
|\mathcal{B}^U_{n}| \leq n |\mathcal{A}_n|.
$$
%where $f_\max$ is the maximum size of a tree not in $\cU_0$.
By assumption, $(u_{\max})^{-1}<\epsilon$, so $\frac{n}{u_{\max}(n-u_{\max})} = \frac{1}{u_{\max}(1-u_{\max}/n)} \leq 2\epsilon$ if $n$ is large enough.
Therefore, for $n$ large enough, we have:
\begin{align}\label{eq:next_paper2}
\sum_{U\in\cU\setminus\cU_0}
|\mathcal{B}^U_{n}| \leq 2 \epsilon |\mathcal{A}_n|.
\end{align}
Putting all bounds together, we obtain:
$$
|\mathcal{B}_n|
= \sum_{U\in \cU} |\mathcal{B}^U_{n}|
= \sum_{U\in\cU_0} |\mathcal{B}^U_{n}|
+ \sum_{U\in\cU\setminus\cU_0} |\mathcal{B}^U_{n}|
\leq \left( \left(1+\epsilon \right)(1-\epsilon)^{-2}+ 2 \epsilon\right) \frac{1}{2} |\mathcal{A}_n|.
$$
Now, when $\epsilon'$ is small enough and $\epsilon:=\epsilon'/3$, we have $ \left(1+\epsilon \right)(1-\epsilon)^{-2}+ 2 \epsilon\leq (1+2 \epsilon')$. We thus get:
$$
|\mathcal{B}_n| \leq \left( \frac{1}{2}+\epsilon'\right) |\mathcal{A}_n|,
$$
which concludes the proof of Proposition~\ref{prop:AvsB}.
%(NOTE: I changed a $(1+2\epsilon')$ for a $(1+\epsilon')$. I think it was just a typo, but maybe I'm missing something)

\subsection*{Acknowledgements}
We thank Colin McDiarmid and Sergey Norin for comments on an early version of this paper, and Mihyun Kang for interesting feedback. We also thank Colin McDiarmid for detecting a technical flaw in a previous version of the proof of Proposition~\ref{prop:ivsi1}.
We thank the organizers and participants of the workshops \textit{Graph Theory} and \textit{Combinatorics, Probability and Geometry} held in March--April 2015 in Bellairs institute, McGill University, Barbados, where this result was announced. 

\bibliographystyle{alpha}
\bibliography{biblio}

\end{document}